\newtheorem{theorem}{Theorem}[section]
\newtheorem{lemma}[theorem]{Lemma}
\newtheorem{corollary}[theorem]{Corollary}
\newtheorem{proposition}[theorem]{Proposition}
\theoremstyle{definition}
\newtheorem{remark}[theorem]{Remark}
\newtheorem{problem}[theorem]{Problem}
 \newcommand{\restrict}{\,{\mathbin{\vert\mkern-0.3mu\grave{}}}\,}
\newcommand{\remove}[1]{}
\DeclareMathOperator{\Rn}{{\mathbb R^{\it n}}}
\DeclareMathOperator{\Rm}{{\mathbb R^{\it m}}}
\DeclareMathOperator{\maxspec}{\boldsymbol{\mu}}
\DeclareMathOperator{\pos}{\rm pos}
\DeclareMathOperator{\range}{\rm range}
\DeclareMathOperator{\gen}{\rm gen}
\DeclareMathOperator{\bd}{\rm bd}
\DeclareMathOperator{\ellg}{\ell-{\rm group}}
\DeclareMathOperator{\ells}{\ell-{\rm subgroup}}
\DeclareMathOperator{\ellt}{\ell-{\rm group\,\, term}}
\DeclareMathOperator{\elli}{\ell-{\rm ideal}}
 \title[Fans and generators of free abelian $\ell$-groups]
{Fans and generators of free abelian $\ell$-groups}
\author{Daniele Mundici }
\address[D. Mundici]{Department of
Mathematics and Computer Science  ``Ulisse Dini'' \\
University of Florence\\
Viale Morgagni 67/A \\
I-50134 Florence \\
Italy}
\email{ mundici@math.unifi.it }
\date{\today}
\begin{document}

\thanks{2000 {\it Mathematics Subject Classification.}
Primary: 06F20,     08B30. Secondary:  06B25,  52B20, 52B55,  55N10, 57Q05.}
\keywords{Abelian $\ell$-group,  rational polyhedral cone,
 fan, regular fan, nonsingular fan, desingularization, stellar operation,
 $\Delta$-linear support function, 
  isomorphism problem, 
 piecewise linear function,
 Baker-Beynon duality, 
  Markov unrecognizability theorem, 
  Novikov unrecognizability theorem, word problem, finite presentation,
  free generating set.}

\begin{abstract}    
Let  $t_1,\ldots,t_n$  be  $\ell$-group
terms  in the  variables  $X_1,\ldots,X_m$.
 Let  $\hat t_1,\ldots,\hat t_n$
be their  associated piecewise homogeneous linear
functions.  Let
$G $ be the $\ellg$  generated by $\hat t_1,\ldots,\hat t_n$
in the  free $m$-generator $\ellg$ $\mathcal A_m.$ 
We prove:  (i)  the
problem whether    $G$ is $\ell$-isomorphic to 
  $\mathcal A_n$ is decidable;
 (ii) the 
problem whether    $G$ is $\ell$-isomorphic to $\mathcal A_l$
($l$ arbitrary)
is undecidable;
(iii) for $m=n$, the problem whether  
$\{\hat t_1,\ldots,\hat t_n\}$ is a 
{\it free} generating set
is decidable. 
In view of the 
Baker-Beynon duality,
these  theorems yield 
recognizability and unrecognizability results
for the rational polyhedron   associated
to the $\ell$-group $G$. 
We  make pervasive  
use of  fans and their stellar subdivisions.
\end{abstract}

\maketitle

\section{Foreword and statement of the main result}

The literature on (abelian throughout this paper) $\ellg$ presentations
offers a small  number of decidability/undecidability results, notably
the  coNP-completeness theorem
for the word problem \cite{wei},  and the  
undecidability theorem for the isomorphism problem
of  finitely presented $\ellg$s \cite{glamad},
which follows via the Baker-Beynon duality  from  Markov's
unrecognizability theorem for combinatorial manifolds
 \cite{bak,bey77due}, \cite{chelek, sht}, \cite[Chapter 13]{glahol}.
In this paper  novel
 decidability and undecidability results are proved for 
$\ellg$s ``presented'' by their generating
sets---rather than by principal $\ell$-ideals of free
$\ellg$s. Fans, with their stellar operations,
\cite{ewa,oda}, provide
a  main tool for the proofs.

We assume the reader is well acquainted  with $\ellg$s \cite{and,bkw,gla-book}.  
Every  $\ellg$  term  $t(X_1,\ldots,X_m)$ is canonically interpreted
as a piecewise homogeneous linear
 function  $\hat t\colon \Rm\to \mathbb R$ by setting
$\widehat{X_i} =$  the $i$th coordinate function
$\pi_i\colon\Rm \to \mathbb R$, and then inductively letting
the operation symbols  $\vee,\wedge,+,-$ act
 as the pointwise operations of max, min, sum
and subtraction in $\mathbb R.$ The symbol 0 is interpreted as the
constant zero function  over $\Rn.$

Given  $\ellg$  terms  $t_1,\ldots,t_n$, let
$V=\{X_1,\ldots,X_m\}$ be the union
of the sets of variables occurring in these terms.
It  is no loss of
generality to assume that all the variables of $V$ actually occur in each
$t_j$.
Following \cite{gla-book},  we denote by $\mathcal A_m$   
  the free $\ellg$  over the free generating set 
 $\pi_1,\ldots,\pi_m$.
 Our first two results in this paper are as follows:


\begin{theorem}
\label{theorem:main}
The following problem is decidable:

\smallskip 
\noindent
${\mathsf{INSTANCE}:}$  
$\ell$-group  terms  $t_1,\ldots,t_n$  in
the same variables $X_1,\ldots,X_m$.  
 
\smallskip  
\noindent
${\mathsf{QUESTION}:}$  
Is the $\ell$-subgroup of $\mathcal A_m$  
generated by  \,$\hat t_1,\ldots,\hat t_n$\,\, 
$\ell$-isomorphic to   $\mathcal A_n$?

%


\end{theorem}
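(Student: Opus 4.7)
The plan is to reduce the question to a decidable covering problem for rational polyhedral cones in $\mathbb R^n$, via Baker--Beynon duality. Let $\hat\tau:=(\hat t_1,\ldots,\hat t_n)\colon\mathbb R^m\to\mathbb R^n$. The assignment $\pi_i\mapsto\hat t_i$ extends to a surjective $\ell$-homomorphism $\psi\colon\mathcal A_n\twoheadrightarrow G$, whose kernel, by Baker--Beynon, is precisely the $\ell$-ideal of all $f\in\mathcal A_n$ vanishing on the closed set $Z:=\overline{\hat\tau(\mathbb R^m)}\subseteq\mathbb R^n$.

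The crux of the argument is the equivalence
\[
G\cong\mathcal A_n\iff Z=\mathbb R^n.
\]
The direction ``$\Leftarrow$'' is immediate, since then $\ker\psi=\{0\}$ and $\psi$ is an isomorphism. For ``$\Rightarrow$'' I would appeal to the Hopfian character of $\mathcal A_n$: every surjective $\ell$-endomorphism is bijective. Concretely, if $Z\subsetneq\mathbb R^n$, then $G\cong\mathcal A_n/\ker\psi$ corresponds under Baker--Beynon to the $\ell$-group of piecewise homogeneous linear integer-coefficient functions on a proper closed rational polyhedral conical subset of $\mathbb R^n$, which cannot be free of rank $n$ by invariance of the rank of a free abelian $\ell$-group.

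With the criterion $Z=\mathbb R^n$ in hand, the remaining work is effective polyhedral geometry. From the syntactic structure of $t_1,\ldots,t_n$ one computes a rational fan $\Sigma$ of $\mathbb R^m$ linearizing every $\hat t_j$ on each maximal cone, with explicit integer coefficients. On a maximal cone $C\in\Sigma$ with rational extremal generators $v_1,\ldots,v_k$, one has $\hat\tau(C)=\pos(\hat\tau(v_1),\ldots,\hat\tau(v_k))$, a rational polyhedral cone in $\mathbb R^n$. Hence $\hat\tau(\mathbb R^m)$ is a finite, effectively computable union of rational polyhedral cones. Whether this union equals $\mathbb R^n$ is decidable: form a common rational refinement and test each of its maximal cones for inclusion in some $\hat\tau(C)$, amounting to finitely many rational linear feasibility tests.

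The principal obstacle is the Hopfian step. Although the statement is essentially folklore, it must be argued within the fan/cone framework—via rank invariance, or equivalently a dimension count on the dual rational polyhedral cone decomposition—in order to be usable effectively here. Once that equivalence is secured, the cone-theoretic operations used in the algorithm are routine and fit squarely into the stellar-subdivision machinery of \cite{ewa,oda} developed in the sequel.
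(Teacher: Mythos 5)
Your reduction of the question to ``does $\range(\hat\tau)$ cover $\mathbb R^n$?'' is exactly the paper's Claim~1, and the algorithmic half of your plan is sound: computing $\range(\hat\tau)$ as an explicit finite union of rational simplicial cones and then deciding the covering by refinement and linear feasibility tests is a legitimate (arguably more direct) alternative to the paper's device of dovetailing two semi-decision procedures, one searching for a rational point outside the range, the other enumerating pairs of regular fans with isomorphic abstract simplicial complexes. The genuine gap is in the only nontrivial step, namely $\gen(\hat t)\cong\mathcal A_n\Rightarrow\range(\hat\tau)=\mathbb R^n$. You delegate this to the Hopfian property of $\mathcal A_n$ and propose to secure that property ``via rank invariance, or equivalently a dimension count.'' Neither suffices. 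Take $Z\subsetneq\mathbb R^n$ to be the closed half-space $\{x\mid x_1\geq 0\}$, i.e.\ the zeroset of $\widehat{X_1\wedge 0}$, a perfectly good support of a rational fan: then $\mathcal D(Z)$ is $n$-generated (a quotient of $\mathcal A_n$), is even projective by Theorem \ref{theorem:projective}, and its maximal spectral space $Z\cap S^{n-1}$ is a closed hemisphere of dimension $n-1$ --- the same generating rank and the same spectral dimension as $\mathcal A_n$. So no rank or dimension invariant can exclude $\mathcal D(Z)\cong\mathcal A_n$ when $Z$ is full-dimensional; what is needed is the finer topological fact the paper uses (Lemma \ref{lemma:sphere} together with the observation that a proper closed subset of $S^{n-1}$ embeds, via stereographic projection from an omitted point, into $\mathbb R^{n-1}$, where $S^{n-1}$ does not embed), which yields $\maxspec(\mathcal D(Z))\not\cong S^{n-1}\cong\maxspec(\mathcal A_n)$.

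Moreover, invoking Hopficity as folklore is not an available shortcut here: in the paper the Hopfian property of $\mathcal A_n$ (Proposition \ref{proposition:hopf}) is itself proved by precisely this spectral/sphere argument, so it is a sibling of the hard direction of Claim~1 rather than an independent input you can cite. If you wish to keep the Hopfian formulation, you must either prove Hopficity of $\mathcal A_n$ by genuinely independent means, or argue directly as the paper does: if some $x\in S^{n-1}$ lies outside $\range(\hat\tau)$, then $\maxspec(\gen(\hat t))\cong\range(\hat\tau)\cap S^{n-1}$ is a proper closed subset of $S^{n-1}$, hence not homeomorphic to $S^{n-1}$, hence $\gen(\hat t)\not\cong\mathcal A_n$. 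Finally, even granting Hopficity, your argument still needs the (easy, but unstated) construction of a nonzero $\mathsf B$-map vanishing on a proper closed union of rational cones $Z$, so that $\ker\psi\neq\{0\}$; without it the chain ``$Z\subsetneq\mathbb R^n\Rightarrow\psi$ not injective'' is incomplete.
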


 \smallskip 

\begin{theorem}
\label{theorem:main-due}
The following problem is undecidable:

\smallskip  
\noindent
${\mathsf{INSTANCE}:}$    
$\ell$-group  terms  $t_1,\ldots,t_n$  in
the same variables  $X_1,\ldots,X_m$, and an integer $l>0$.

 \smallskip 
\noindent
${\mathsf{QUESTION}:}$  
Is the $\ell$-group  
generated by  \,\,$\hat t_1,\ldots,\hat t_n$  
\,\,$\ell$-isomorphic to   $\mathcal A_l$ ?
\end{theorem}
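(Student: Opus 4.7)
The plan is to reduce from Markov's unrecognizability theorem for combinatorial manifolds, in the spirit of \cite{glamad} but adapted from finitely presented $\ell$-groups to $\ell$-subgroups of free $\ell$-groups. First I would rephrase the question geometrically. Setting $\hat T\colon\Rm\to\Rn$ to be the $\mathbb Z$-piecewise homogeneous linear map $x\mapsto(\hat t_1(x),\ldots,\hat t_n(x))$ and letting $C=\hat T(\Rm)\subseteq\Rn$, the $\ell$-subgroup $G$ of $\mathcal A_m$ generated by $\hat t_1,\ldots,\hat t_n$ is $\ell$-isomorphic---via the canonical surjection $\mathcal A_n\twoheadrightarrow G$ sending $\pi_i\mapsto\hat t_i$---to the quotient of $\mathcal A_n$ by the $\ell$-ideal of elements vanishing identically on $C$. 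By Baker-Beynon duality, this $G$ is $\ell$-isomorphic to $\mathcal A_l$ if and only if $C$ is $\mathbb Z$-piecewise linearly homeomorphic to $\Rl$.

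Next I would start from an instance of Markov's recognition problem for the PL $(l-1)$-sphere (with $l-1\geq 4$), namely a finite simplicial complex $K$ with rational vertex coordinates for which the predicate ``$K$ is PL homeomorphic to $S^{l-1}$'' is algorithmically undecidable \cite{chelek,sht}. Using the stellar subdivision and desingularization machinery for fans that runs throughout the paper (cf.\ \cite{ewa,oda}), I would construct, effectively in $K$, a regular fan $\Sigma$ in some $\Rn$ whose link of the apex is PL homeomorphic to $K$. The primitive ray generators of $\Sigma$, together with the dual bases of its maximal cones, then provide explicit $\ell$-group terms $t_1,\ldots,t_n$ in some variables $X_1,\ldots,X_m$ whose image $\hat T(\Rm)$ equals $|\Sigma|$. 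Since $\Sigma$ is regular, $|\Sigma|$ is $\mathbb Z$-PL homeomorphic to $\Rl$ if and only if its link is PL homeomorphic to $S^{l-1}$. Combining with the dictionary step, any algorithm for the question of Theorem \ref{theorem:main-due} would decide Markov's problem, contradicting \cite{chelek,sht}.

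The main obstacle is the construction step: I need to realize an arbitrary rational-vertex PL complex $K$ as the link of the apex of a \emph{regular} rational polyhedral cone, and to do so effectively. Regularity is essential, because without it the image cone $C$ could be PL---but not $\mathbb Z$-PL---homeomorphic to $\Rl$, which would break the equivalence underlying the reduction. This is precisely where the paper's systematic development of fans and their stellar subdivisions plays the decisive role: starting from any rational triangulation of the cone on $K$, finitely many stellar operations yield a nonsingular refinement with the same support, from which the required $\ell$-group terms can be written down explicitly.
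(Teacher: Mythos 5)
There is a genuine gap at the heart of your reduction, namely the step where you claim that ``the primitive ray generators of $\Sigma$, together with the dual bases of its maximal cones, provide explicit $\ell$-group terms $t_1,\ldots,t_n$ whose image $\hat T(\Rm)$ equals $|\Sigma|$.'' Ranges of $\mathsf B$-maps defined on all of $\Rm$ are very special sets: by Lemma \ref{lemma:gen-range}, $\mathcal D(\range(\hat T))$ must be ($\ell$-isomorphic to) an $n$-generated $\ell$-subgroup of $\mathcal A_m$, and there is no direct recipe turning the combinatorial data of an arbitrary regular fan into a parametrization of its support as such a range. The paper obtains a parametrization only for zerosets $Z\hat u$, and only indirectly: since $\mathcal A_k/\langle \hat u\rangle$ is projective (Theorem \ref{theorem:projective}), it is a retract of $\mathcal A_k$, so \emph{some} idempotent $\mathsf B$-map $h$ has range $\mathsf B$-homeomorphic (not equal) to $Z\hat u$; and even then $h$ is not computed from $\hat u$---the proof runs a dovetailing search (machine $\mathcal E$, via Lemma \ref{lemma:combinatorial}) over all $k$-tuples of terms and all pairs of regular fans, whose termination is guaranteed by that existence claim. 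Your proposal asserts the parametrization is explicit and exact; this is precisely the missing idea, and without an effective way to produce the terms your map from sphere-recognition instances to instances of Theorem \ref{theorem:main-due} is not a Turing reduction.

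A second gap: your assertion that ``since $\Sigma$ is regular, $|\Sigma|$ is $\mathbb Z$-PL homeomorphic to $\Rl$ if and only if its link is PL homeomorphic to $S^{l-1}$'' is unjustified in the nontrivial direction. Regularity of $\Sigma$ does not upgrade a PL homeomorphism of links (or supports) to a $\mathsf B$-homeomorphism of cones; relating \emph{integral} homeomorphism type to PL recognition of spheres is exactly the delicate content of Glass and Madden's theorem, quoted here as Theorem \ref{theorem:glamad}. The paper sidesteps both difficulties by reducing from Theorem \ref{theorem:glamad} itself rather than from Markov/Novikov directly: given an instance asking whether $\mathcal A_k/\langle \hat u\rangle \cong \mathcal A_l$, it searches for terms $s_{t^*}$ with $\range(\hat s_{t^*})$ $\mathsf B$-homeomorphic to $Z\hat u$, so that $\gen(\hat s_{t^*})\cong \mathcal A_k/\langle \hat u\rangle$ and the two questions have the same answer. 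If you replace your explicit construction by such a guaranteed-to-terminate search and start from Theorem \ref{theorem:glamad} instead of raw sphere unrecognizability, your argument essentially becomes the paper's.
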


In Proposition \ref{proposition:hopf} we prove that
every finitely generated free $\ell$-group  $G$  is 
{\it hopfian}, in the sense that every surjective $\ell$-endomorphism
of $G$  is injective, \cite{eva, hir, kos}. Combining this result with
Theorem \ref{theorem:main} we obtain:

\smallskip  
\begin{corollary}
\label{corollary:main-tre}
The following problem is decidable:

 \smallskip 
\noindent
${\mathsf{INSTANCE}:}$  
$\ell$-group  terms  $\,t_1,\ldots,t_n\,$  in
the same variables  $X_1,\ldots,X_n$.

 
\smallskip  
\noindent
${\mathsf{QUESTION}:}$  
Is $\,\{\hat t_1,\ldots,\hat t_n\}\,$
a  {\em free}  generating set of 
the $\ell$-group it generates 
 in $\,\mathcal A_n?$ 
\end{corollary}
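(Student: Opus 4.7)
My plan is to reduce the question to Theorem~\ref{theorem:main} by means of the hopfian property announced in Proposition~\ref{proposition:hopf}. Writing $G$ for the $\ell$-subgroup of $\mathcal A_n$ generated by $\hat t_1,\ldots,\hat t_n$, I will show that $\{\hat t_1,\ldots,\hat t_n\}$ is a free generating set of $G$ if and only if $G$ is $\ell$-isomorphic to $\mathcal A_n$. Once this equivalence is established, the decidability claim follows by invoking Theorem~\ref{theorem:main} with $m=n$ on the given input $t_1,\ldots,t_n$.

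The ``only if'' direction is immediate from the universal property of the free $\ell$-group: if $\hat t_1,\ldots,\hat t_n$ freely generate $G$, then the assignment $\hat t_i\mapsto \pi_i$ extends to an $\ell$-isomorphism $G\to \mathcal A_n$. For the ``if'' direction, I would assume $\psi\colon G\to \mathcal A_n$ is an $\ell$-isomorphism and set $g_i=\psi(\hat t_i)$, so that $g_1,\ldots,g_n$ generate $\mathcal A_n$. The assignment $\pi_i\mapsto g_i$ then extends, by the universal property of $\mathcal A_n$, to an $\ell$-endomorphism $\phi\colon \mathcal A_n\to \mathcal A_n$; since its image contains a generating set, $\phi$ is surjective. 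By Proposition~\ref{proposition:hopf}, $\phi$ is automatically injective, hence an $\ell$-automorphism. This forces $g_1,\ldots,g_n$ to be a free generating set of $\mathcal A_n$, and transporting back through $\psi^{-1}$ shows that $\hat t_1,\ldots,\hat t_n$ freely generate $G$, as required.

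The only nontrivial ingredient in the whole argument is the hopfian property of $\mathcal A_n$ (Proposition~\ref{proposition:hopf}); everything else amounts to routine manipulation of universal properties. Granted that proposition and Theorem~\ref{theorem:main}, I foresee no further obstacle. The one place to be mildly careful is verifying that ``$\phi$ surjective plus hopfian'' really upgrades to ``$\{g_1,\ldots,g_n\}$ freely generates'', but this is precisely what an injective-and-surjective $\ell$-endomorphism of a free object on $n$ generators gives us.
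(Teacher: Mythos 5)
Your argument is correct and follows essentially the same route as the paper: reduce to the equivalence ``$\{\hat t_1,\ldots,\hat t_n\}$ freely generates $G$ iff $G\cong\mathcal A_n$'' and then invoke Theorem~\ref{theorem:main}, with the nontrivial direction settled by the hopfian property of Proposition~\ref{proposition:hopf}. The only cosmetic difference is that you transport the generators into $\mathcal A_n$ via $\psi$ and apply hopficity there, whereas the paper pulls a free generating set back into $\gen(\hat t)$ and applies hopficity inside $\gen(\hat t)$; the two are the same argument up to conjugation by the isomorphism.
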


In Section \ref{section:final} 
these theorems will be shown to yield
recognizability and unrecognizability results
for the rational polyhedron canonically associated
by the  Baker-Beynon duality
 to the $\ell$-group
generated by $\hat t_1,\dots,\hat t_n$.

%
%
%
%
%
%

\section{Preliminary notions and results}
It is often the case that  the proof of a theorem 
requires a number of  preliminary results  involving 
 many more notions than those quoted in the statement of the theorem itself.
This section and the next one are  devoted to such background
notions and their main properties.
 
 \medskip
\noindent{\it Terminological stipulations.}
Throughout this paper the adjective
``linear'' is understood in the homogeneous sense.
The
number of linear pieces of any piecewise linear map  is finite,
and so is the number of elements of any abstract or geometric complex.
The adjectives  ``decidable, computable, \dots '' stand
for  ``Turing decidable, Turing computable, \dots  ''.
Following \cite{sta}, by a {\it rational polyhedron $P$ in $\Rn$}
we mean a finite union  $\bigcup S_i$ of simplexes  $S_i\subseteq
\Rn$ such that the coordinates of the vertices of each $S_i$
are rational. $P$ need not be convex, nor
connected.  We also let ``$\bd$''  denote boundary. 

\medskip
\noindent
{\it Cones and fans.\,\,} Following \cite{ewa}, an integer
 vector $v\in \mathbb Z^n\subseteq \Rn$ is said to be {\it primitive} if the 
greatest common divisor of its
coordinates is 1.
Given vectors $v_1,\ldots,v_s \in \Rn$ we denote by
$\pos(v_1 ,\ldots, v_s)$ their {\it positive hull}
in $ \Rn$.
In  symbols,
\begin{equation}
 \label{equation:pos}
 \pos(v_1 ,\ldots, v_s)
=\{\lambda_1v_i+\cdots+\lambda_sv_s\mid
\lambda_i\geq 0, \,\,i=1,\ldots,s\}.
\end{equation}
Any set $C$  of the form  $\pos(v_1 ,\ldots, v_s)\subseteq \Rn$
is known as a {\it cone} in $\Rn$.
If   in addition $v_1 ,\ldots, v_s$ are integer vectors in $\mathbb Z^n$
then $C$ is said to be a {\it rational}  cone. 
For $t=1,2,\ldots,n$,
a  {\it $t$-dimensional rational
 simplicial cone} 
 \footnote{Called ``simple cone'' or ``simplex cone''   in \cite{ewa}.}
 in $ \Rn$
  is a set $C \subseteq  \Rn$ of the form 
$$
C=\pos(w_1 ,\ldots, w_t) 
$$
for linearly independent primitive vectors 
$w_1 ,\ldots, w_t \in \mathbb Z^n \subseteq  \mathbb \Rn$.  
The vectors $w_1 ,\ldots, w_t$ are called the primitive
{\it  generating vectors} of $\sigma$.  They are uniquely
determined by $C$. 
By a {\it face } of $C$ we mean the positive hull
of a subset $S$ of $\{w_1 ,\ldots, w_t\}.$
For   completeness we stipulate that
the face of  $C$  determined by the empty
set is the singleton  $\{0\}$.  This is the only
0-dimensional cone in  $\mathbb R^n$.

By a  {\it rational  simplicial fan}, or just a  {\it fan}  in $\mathbb R^n$ 
we mean  a  complex  $\Phi$  of rational simplicial cones
in $\mathbb R^n$: thus $\Phi$ is closed under
taking faces of its cones, and the intersection of any two cones
$C,D\in \Phi$ is a common face of $C$ and $D$.  Note
that the intersection of all cones of $\Phi$ is the singleton 
cone $\{0\}.$
We denote by $|\Phi| \subseteq \mathbb R^{n}$   the
{\it support} of  $\Phi$, i.e., the pointset union of
all cones in $\Phi$.\footnote{Supports of fans are also known as
``rational closed polyhedral cones'' in \cite[p.244]{bey77due},
or ``pointset unions of complexes of rational simplicial cones'',
\cite[p.248]{bey77due}. Fans provide a combinatorial classifier
for toric varieties, \cite{ewa, oda}.}
$|\Phi|$ need not be convex.
Given two fans  $\Sigma$ and $\Delta$ with the same support, 
we say that
  $\Delta$ \emph{is a subdivision of } $\Sigma$  if
each cone of  $\Sigma$ is a union of cones of $\Delta$.

For  some  $t=1,\ldots,n$
let  $C$ be a $t$-dimensional 
rational simplicial cone in $\Rn$, say,
$C=\pos(d_1 ,\ldots, d_t )$, where
where  $d_1 ,\ldots, d_t$ are the
primitive generating vectors of $C$.
 Following \cite[p.146-147]{ewa}, we then say that $C$ is
 {\it regular}
 (``nonsingular''
in \cite[p.15]{oda}, ``primitive'' in 
 \cite[p.246-247]{bey77due})
 if the set $\{d_1 ,\ldots, d_t\}$  can be extended to a basis of
the free abelian group $\mathbb Z^n$ of integer
points in $\mathbb R^n$. 
The singleton  cone $C=\{0\}$ is regular by definition.

A fan is {\it regular} (``nonsingular''
in \cite{oda}, ``primitive'' in 
 \cite{bey77due}) if so are all its cones. 

\begin{lemma}
\label{lemma:regular}
Let $\Phi$ be a fan  in $\Rn.$
Then the regularity of $\Phi$  is decidable, once
the set $V_C$ of primitive generating vectors of each 
 cone $C\in \Phi$ is explicitly given. 
\end{lemma}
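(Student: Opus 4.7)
The plan is to reduce the regularity check for the whole fan $\Phi$ to a regularity check for each individual cone, and then to show that the latter is effectively testable by a standard piece of integer linear algebra.

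First, since by the terminological stipulations $\Phi$ has finitely many cones, and since by definition $\Phi$ is regular iff each $C\in\Phi$ is regular, it suffices to exhibit an algorithm that, given the primitive generating vectors $d_1,\dots,d_t\in\mathbb Z^n$ of a $t$-dimensional rational simplicial cone $C=\pos(d_1,\dots,d_t)$, decides whether $\{d_1,\dots,d_t\}$ can be extended to a $\mathbb Z$-basis of $\mathbb Z^n$. (The zero-dimensional case $C=\{0\}$ is regular by definition and requires no computation.)

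Next I would recall the following well-known criterion from integer linear algebra: a set $\{d_1,\dots,d_t\}$ of linearly independent vectors in $\mathbb Z^n$ extends to a $\mathbb Z$-basis of $\mathbb Z^n$ if and only if the subgroup $\langle d_1,\dots,d_t\rangle$ is a direct summand of $\mathbb Z^n$, equivalently, if and only if the greatest common divisor of the $t\times t$ minors of the $n\times t$ integer matrix $M_C=[d_1\mid\cdots\mid d_t]$ equals $1$. Equivalently, the Smith normal form of $M_C$ must have all its elementary divisors equal to $1$. Both the gcd-of-minors test and the Smith normal form computation are standard Turing-computable procedures on integer matrices, so regularity of $C$ is decidable from the explicitly given list $V_C=\{d_1,\dots,d_t\}$.

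Finally, the algorithm for $\Phi$ is simply: enumerate the finitely many cones $C\in\Phi$, and for each $C$ apply the above test to $V_C$; output \emph{regular} iff every test succeeds.

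There is no real obstacle: the only thing being used beyond the definitions is the classical fact that $\mathrm{GL}_n(\mathbb Z)$-reduction (Smith normal form, or equivalently minor-gcd computation) is effective, which is entirely standard. The point of the lemma is not depth but rather to record explicitly the computability bound that will be invoked when the main theorems are proved.
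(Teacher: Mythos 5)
Your proof is correct, and it takes a genuinely different route from the paper's. The paper also reduces to the individual cones, but for each cone $C=\pos(v_1,\ldots,v_s)$ it invokes Minkowski's convex body theorem to show that $C$ is regular iff the half-open parallelepiped $P_C=\{\lambda_1v_1+\cdots+\lambda_sv_s \mid 0\leq\lambda_i<1\}$ contains no nonzero integer point, and then decides this by exhaustive search over the finitely many integer points of that bounded region. You instead use the purely algebraic criterion that $\{d_1,\ldots,d_t\}$ extends to a $\mathbb Z$-basis of $\mathbb Z^n$ iff the gcd of the $t\times t$ minors of $[d_1\mid\cdots\mid d_t]$ is $1$, equivalently iff the Smith normal form has all elementary divisors equal to $1$; this is a correct and standard fact (the gcd of the $t\times t$ minors is the product of the elementary divisors, and extendability is equivalent to saturation of the generated sublattice), and it is effectively checkable, indeed in polynomial time, which is arguably cleaner than exhaustive point enumeration. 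What the paper's geometric criterion buys, and your argument does not directly provide, is a concrete \emph{witness} of non-regularity: a nonzero integer point of $P_C$, which is exactly the point used as the center of the stellar subdivision (``starring'') in the desingularization procedure of Lemma \ref{lemma:smooth}. So your proof fully establishes Lemma \ref{lemma:regular}, but the paper's choice of criterion is made with the next lemma in mind.
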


\begin{proof} Let  $C$ be a cone of $\Phi$ with
its
primitive generating vectors  $v_1,\ldots,v_s\in \mathbb Z^n$.
From Minkowski's classical convex body theorem,
\cite[Theorems 446-447]{harwri}, it follows  that
$C$ is regular iff
the half-open parellelepiped 
\begin{equation}
\label{equation:parallelepiped}
P_C=\{\lambda_1v_i+\dots+\lambda_sv_s \mid
0\leq \lambda_i<1,\,\,i=1,\ldots,s\}
\end{equation}
does not contain any integer point except 0.
Exhaustive search of such nonzero integer
point in $C$ will then settle the problem whether
$C$ is regular.
\end{proof}

\begin{lemma}
\label{lemma:smooth}
For any nonempty closed set $W\subseteq \Rn$ the following
conditions  are equivalent:
\begin{itemize}
\item[(a)] $W=\bigcup_iC_i$ for finitely many  rational 
simplicial cones
$C_i$  in $\Rn.$

\smallskip
\item[(b)] $W$ is the support of a regular fan  $\nabla$  in  $\Rn.$
\end{itemize}
Further, the map  $\bigcup_iC_i\mapsto \nabla$  is 
computable, once the  coordinates
of the primitive generating vectors
of the faces of each  $C_i$ are explicitly given. 
\end{lemma}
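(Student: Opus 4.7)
The direction (b) $\Rightarrow$ (a) is immediate, since the support of any regular fan is, by definition, a finite union of rational simplicial cones.

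For (a) $\Rightarrow$ (b) I would proceed in two stages. \emph{Stage 1: refinement to a fan.} Given the collection $\{C_i\}$, the first aim is to produce a (not necessarily regular) rational simplicial fan $\Sigma$ with $|\Sigma|=W$. This is a standard triangulation task: starting from the $C_i$'s together with their faces, one successively subdivides pairwise intersections and collects the resulting simplicial pieces into a fan refining the given cover. The routine can be carried out explicitly by pulling from chosen primitive generators of rays, and yields a fan $\Sigma$ of rational simplicial cones with $|\Sigma|=\bigcup_i C_i=W$.

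\emph{Stage 2: desingularization.} Next, I would invoke the classical desingularization theorem for rational simplicial fans \cite{ewa, oda}: every such fan admits a regular refinement obtained by a finite iteration of stellar subdivisions. Since stellar subdivision preserves the support, the regular refinement $\nabla$ of $\Sigma$ satisfies $|\nabla|=|\Sigma|=W$, which is (b).

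\emph{Computability.} Both stages can be executed effectively once the coordinates of the primitive generating vectors of the faces of each $C_i$ are given. Stage 1 reduces to explicit linear-algebra computations over $\mathbb{Q}$. In Stage 2, Lemma~\ref{lemma:regular} decides regularity of each cone of the current fan; whenever a non-regular cone $C$ is encountered, the half-open parallelepiped $P_C$ of (\ref{equation:parallelepiped}) contains a nonzero integer vector $v$, which can be found by exhaustive search within an a priori bounded box, and the stellar subdivision at the primitive multiple of $v$ produces the next refinement. The main obstacle — treated in detail in \cite{ewa, oda} — is the verification of termination: one exhibits a strictly decreasing nonnegative-integer invariant built from the multiplicities of the cones, so that after finitely many iterations no non-regular cone remains.
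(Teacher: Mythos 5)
Your proposal is correct and follows essentially the same route as the paper: first triangulate $\bigcup_i C_i$ into a rational simplicial fan (the homogeneous analogue of the classical polyhedral triangulation result), then apply the effective desingularization by stellar subdivisions, deciding regularity of each cone via Lemma~\ref{lemma:regular} and locating the witness integer point in the parallelepiped \eqref{equation:parallelepiped} by bounded search, with termination deferred to the standard arguments in \cite{ewa,oda}. No substantive difference from the paper's proof.
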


\begin{proof}
For the nontrivial direction,  we first compute 
 a fan  $\Phi$ with   $|\Phi|=W.$  This is the homogenous
 version of a well known result  in piecewise linear topology stating that
 every finite union $U = \bigcup T_i\subseteq \Rn$ of simplexes
 $T_i$  in $\Rn$
 has a triangulation  $\mathcal T$, \cite[\S 2, p.32]{sta}. 
If the  vertices of each 
 $T_i$  are rational,  then so are the
 vertices all simplexes in  $\mathcal T$, and
$\mathcal T$ is  computable.
 Similarly the fan
 $\Phi$ is   computable,
once the rational coordinates of the primitive
generating vectors of each cone $C_i$ are explicitly 
given. 
Next we construct the desired regular subdivision 
of   $\Phi$ into a regular fan  $\nabla$ with 
$|\Phi|=\nabla$, using 
 the ``desingularization'' procedure in   \cite[proof of Theorem VI.8.5, p.253]{ewa},
or \cite[Proposition 2.1]{bey77due}.  
Perusal of the proofs of these theorems,
or familiarity with the desingularization of singular fans,
shows that the sequence of stellar subdivisions
$\Delta_0=\Phi, \Delta_1,\Delta_2,\ldots,\Delta_{u-1}, \Delta_u=\nabla$
in the desingularization procedure  is 
computable. At each  step
$\Delta_t\mapsto \Delta_{t+1},$ assuming
the fan  $\Delta_t$ is not regular (a condition that can be decided, by
{Lemma
\ref{lemma:regular}}) we pick an integer point $x$ of
the half-open parellelepiped $P_C$
of some  cone $C\in \Delta_t$ witnessing
the non-regularity of $C$, 
(notation of \eqref{equation:parallelepiped}). 
Again,   $x$ is given by Minkowski convex body theorem. Then
we let  $\Delta_{t+1}$ be the new fan obtained
by ``starring''    $ \Delta_t$ at $x$. Such  
stellar subdivision operation
is purely combinatorial, \cite[III, Definition 2.1]{ewa}.
 The procedure stops when a
regular subdivision $\nabla$ of $\Phi$ is obtained.
In conclusion,  the
map $\Phi\mapsto \nabla$  is effectively computable, whence so is the map
$W\mapsto  \nabla$.
\end{proof}

\noindent{\it $\mathsf B$-maps,\footnote{As an effect of
 {Lemma  \ref{lemma:extension}}, real-valued
$\mathsf B$-maps are also  known as 
``piecewise  homogeneous linear
functions with integer coefficients''
 \cite{and, glamad,gla-book}, 
``$\ell$-maps with integer coefficients'' \cite{bak}, 
``integral $\ell$-maps'' \cite{bey75, bey77due},
 ``radiant functions`` \cite{mun-tams},  
 ``integral pwhl-maps'',
etc.  We have taken the liberty of increasing the terminological entropy
of  these functions, introducing  the two-syllabled 
neologism ``$\mathsf B$-map'' as the only new notational 
stipulation in this paper.}
$\mathsf B$-homeomorphism.}
Given a fan $\Delta$ in $\Rn$,  by a
$\mathsf B$-{\it map}   $f \colon |\Delta| \to \Rm$ we mean 
  a piecewise homogeneous
linear map such that each linear piece of $f$ has integer coefficients.
More precisely, $f$ is continuous and
 there are  linear homogeneous functions
$l_1,\ldots,l_k\colon |\Delta| \to \Rm$ with integer coefficients such that
for each  $x\in  |\Delta|$ there is $j\in\{1,\ldots,k\}$
with $f(x)=l_j(x).$
In particular, given  fans  $\Delta$ in $\Rn$ and $\nabla$ in $\Rm$, a
{\it $\mathsf B$-homeomorphism}\footnote{Also known
as integral piecewise homogeneous linear homeomorphism,
or $\ell$-equivalence.} 
of $|\Delta|$ onto $|\nabla|$ is
an invertible $\mathsf B$-map  $h$ of $|\Delta|$ onto $|\nabla|$  such that
also $h^{-1}$ is a $\mathsf B$-map.  

\smallskip
For $n=1,2,\dots,$   the free $n$-generator $\ell$-group  
 $\mathcal A_n$ coincides with
the $\ellg$  of    $\mathsf B$-maps 
   $f\colon \Rn\to \mathbb R$,
equipped with the
pointwise operations of the
$\ellg$  $(\mathbb R,0,+,-,\vee,\wedge).$
See \cite[Theorem 6.3]{and} for a proof.

\begin{lemma}[Extension]
\label{lemma:extension}
Let $\Delta$ be a  fan   in $\Rn$. Then every  
$\mathsf B$-map $f\colon |\Delta|\to \mathbb R$ can be extended to a
  $\mathsf B$-map   $g \colon \Rn\to \mathbb R.$ 
In symbols, 
\begin{equation}
\label{equation:extension}
f\in \mathcal A_n\restrict |\Delta|=
\{g\restrict |\Delta|  \mid g\in \mathcal A_n\},
\end{equation}
where   $\restrict$ denotes restriction.
\end{lemma}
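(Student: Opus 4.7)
The plan is to reduce the task to prescribing integer values at the primitive ray generators of a complete regular fan $\Sigma$ of $\Rn$ that contains $\Delta$ (after refinement) as a subfan, and then extending linearly on each cone of $\Sigma$.

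As a first step I would refine $\Delta$ to a regular fan $\nabla$ with $|\nabla|=|\Delta|$ on whose cones $f$ agrees with a single linear form having integer coefficients. Indeed, by the $\mathsf B$-map definition the linearity regions of $f$ partition $|\Delta|$ into finitely many rational polyhedral pieces cut out by the differences $l_i-l_j$; intersecting the cones of $\Delta$ with those regions yields a refinement by rational simplicial cones, and Lemma \ref{lemma:smooth} lets me further refine it to make it regular without changing its support.

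Next I would complete $\nabla$ to a fan $\Sigma_0$ with $|\Sigma_0|=\Rn$ containing $\nabla$ as a subfan; such fan completions are classical, cf.\ \cite[Chapter III]{ewa}. Since the desingularization procedure of \cite[Theorem VI.8.5]{ewa} only subdivides non-regular cones, applying it to $\Sigma_0$ leaves the already regular subfan $\nabla$ intact, producing a complete regular fan $\Sigma$ with $\nabla\subseteq\Sigma$.

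It then remains to define $g$. For each primitive ray generator $w$ of $\Sigma$, set $g(w)=f(w)$ if $w\in|\nabla|$ and $g(w)=0$ otherwise; in the first case $f(w)\in\mathbb{Z}$, since the integer-coefficient linear form agreeing with $f$ on the cone through $w$ takes integer values at integer vectors. On each cone $C=\pos(w_1,\ldots,w_t)\in\Sigma$, extend $g$ by $g(\sum_i\lambda_iw_i)=\sum_i\lambda_i g(w_i)$ with $\lambda_i\geq 0$; regularity of $C$ means that $\{w_1,\ldots,w_t\}$ completes to a $\mathbb{Z}$-basis of $\mathbb{Z}^n$, so in standard coordinates this linear piece has integer coefficients. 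Continuity across shared faces is automatic, because the linear piece on a common face depends only on the prescribed $g$-values at the generators of that face, which are shared by both adjacent cones. By construction $g$ agrees with $f$ on every cone of $\nabla$, hence on $|\Delta|$, and $g\in\mathcal{A}_n$.

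The principal obstacle is the completion step: one must ensure that $\nabla$ can be extended to a complete fan of $\Rn$ in such a way that $\nabla$ itself remains a subfan, and that this inclusion survives the subsequent desingularization. Both are known facts in the fan-theoretic literature, but they are the only non-routine ingredients; once they are in place, the prescription-and-linear-extension recipe takes care of the rest.
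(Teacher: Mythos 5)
Your proof is correct, but it takes a genuinely different route from the paper's. The paper settles the lemma in one line, by transplanting Beynon's extension argument (\cite[Corollary 1 to Theorem 3.1]{bey75}) to complexes of rational cones, with Beynon's characterization of finitely generated projective $\ell$-groups \cite[Theorem 3.1]{bey77due} replacing Baker's; all the geometry is delegated to that duality machinery. You instead build the extension by hand: linearize $f$ on a regular refinement $\nabla$ of $\Delta$ (the same device as in Lemmas \ref{lemma:zeroset=support} and \ref{lemma:smooth} --- note that you need the \emph{proof} of Lemma \ref{lemma:smooth}, which proceeds by subdivision, rather than its statement, which only promises some regular fan with the given support), complete $\nabla$ to a rational fan of $\Rn$, desingularize, and then prescribe integer values at the primitive ray generators and extend linearly cone by cone; in effect you write $g$ as an integer combination of the $\Delta$-linear support functions (Schauder hats) that the paper itself invokes in Lemma \ref{lemma:combinatorial}. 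This buys a self-contained, effective construction in the spirit of the rest of the paper, at the cost of the two inputs you flag: the completion theorem for rational fans, which is indeed classical and may be cited, and the assertion that desingularization leaves $\nabla$ intact. The latter deserves one more line than you give it: each starring point $x$ is a nonzero lattice point of the half-open parallelepiped $P_C$ of a non-regular cone $C$, and if $x$ lay in a regular cone $D$ of the fan it would lie in the common face $C\cap D$, hence be a nonzero lattice point of the parallelepiped of a regular cone, which is impossible; so no regular cone contains the starring ray, and none is subdivided (the same remark disposes of the preliminary simplicialization of the completion, which stars only at rays already present in the fan). If you wish to avoid the completion theorem altogether, you can take instead the complete rational fan induced by the arrangement of the linear hyperplanes cutting out the cones of $\nabla$, then triangulate and desingularize it: the resulting complete regular fan need not contain $\nabla$ as a subfan, but each of its cones lying in $|\nabla|$ is contained in a single cone of $\nabla$, which is all that your prescription-and-linear-extension step requires.
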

\begin{proof} 
Use  \cite[Corollary 1 to Theorem 3.1]{bey75},
and note that the general extension argument given there
 still holds  for complexes
of  {\it rational} cones, provided we replace
Baker's characterization of finitely generated
projective vector lattices, \cite[Theorem 5.1]{bak},
by Beynon's characterization of 
finitely generated
projective $\ellg$s, \cite[Theorem 3.1]{bey77due}.  
\end{proof}

\medskip
\noindent
{\it The maximal spectral space of $\mathcal A_n$.}
Suppose $G\not= \{0\}$ is a finitely generated $\ell$-group. 
The {\it  spectral  (hull-kernel)}  topology, \cite{bkw}, 
makes the set  of maximal
$\ell$-ideals of $G$ into a nonempty compact Hausdorff space
$\maxspec(G)$, called the
 {\it maximal spectral space}   of $G$.
   The
closed sets in  $\maxspec(G)$ 
have the form 
\begin{equation}
\label{equation:spectral}
\mathcal{Z}{(\mathfrak h)}=\bigcap_{g\in \mathfrak h} \{\mathfrak{m} \in 
\maxspec(G) \mid  g
\in \mathfrak{m} \},
\end{equation}
for  $\mathfrak h$ ranging over all    $\ell$-ideals of $G$.
 
  \begin{lemma}[Maximal spectral space]
\label{lemma:sphere}
$\maxspec(\mathcal A_n)$ is homeomorphic to
the $(n-1)$-sphere  $S^{n-1}$, in symbols, 
$$
\maxspec(\mathcal A_n) \cong S^{n-1}.
$$
 More generally, 
in the notation of \eqref{equation:extension}, for every
fan  $\Phi$ in $\Rn$, 
$$\maxspec(\mathcal A_n\restrict |\Phi|)\cong |\Phi|\cap S^{n-1}.$$  
\end{lemma}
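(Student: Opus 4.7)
The plan is to prove the more general statement and recover the first as the special case where $\Phi$ is any regular complete fan in $\Rn$ (obtained by applying Lemma \ref{lemma:smooth} with $W=\Rn$), so that $|\Phi|=\Rn$, $\mathcal A_n\restrict |\Phi|=\mathcal A_n$, and $|\Phi|\cap S^{n-1}=S^{n-1}$. For every $x\in |\Phi|\cap S^{n-1}$ I will let $\mathfrak m_x$ be the kernel of the evaluation $\ell$-homomorphism $\varepsilon_x\colon \mathcal A_n\restrict |\Phi|\to \mathbb R$, $g\restrict|\Phi|\mapsto g(x)$, which is well defined because $x\in|\Phi|$. The image of $\varepsilon_x$ is a subgroup of $\mathbb R$, which is simple as an abelian $\ell$-group, so $\mathfrak m_x$ is a maximal $\ell$-ideal; this makes $\eta\colon x\mapsto \mathfrak m_x$ well defined.

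Injectivity I will handle via rational separation: given distinct $x,y\in |\Phi|\cap S^{n-1}$, the rays $\pos(x)$ and $\pos(y)$ are distinct, and density of $\mathbb Z^n$ in $\Rn$ yields an integer linear form $\ell$ with $\ell(x)>0>\ell(y)$; then $\ell\wedge 0$ belongs to $\mathfrak m_x\setminus \mathfrak m_y$. Surjectivity is the heart of the matter. For an arbitrary $\mathfrak m\in \maxspec(\mathcal A_n\restrict |\Phi|)$, the quotient $(\mathcal A_n\restrict|\Phi|)/\mathfrak m$ is a simple abelian $\ell$-group; as such, it is totally ordered and archimedean, hence by H\"older's theorem embeds into $\mathbb R$. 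Composing with the restriction $\rho\colon \mathcal A_n\twoheadrightarrow \mathcal A_n\restrict|\Phi|$ produces an $\ell$-homomorphism $h\colon \mathcal A_n\to \mathbb R$; by the universal property of $\mathcal A_n$, $h$ is completely determined by $x=(h(\pi_1),\ldots,h(\pi_n))\in \Rn$, and in fact $h=\varepsilon_x$. Since $h$ factors through $\rho$, every $\mathsf B$-map on $\Rn$ that vanishes on $|\Phi|$ must also vanish at $x$, and I will argue that this forces $x\in |\Phi|$; after a positive rescaling, $x\in |\Phi|\cap S^{n-1}$ and $\eta(x)=\mathfrak m$.

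Continuity of $\eta$ is immediate from \eqref{equation:spectral}: the preimage of $\zret(\mathfrak h)$ is the set $\{x\in |\Phi|\cap S^{n-1}\mid f(x)=0 \text{ for all }\bar f\in \mathfrak h\}$, closed in $|\Phi|\cap S^{n-1}$ as an intersection of zero sets of continuous functions. Since the source is compact and the target is Hausdorff, the continuous bijection $\eta$ will automatically be a homeomorphism. The main obstacle, as I see it, is the step $x\in |\Phi|$ in the surjectivity argument: I need to produce, for each $x\notin |\Phi|$, an explicit $\mathsf B$-map on $\Rn$ that vanishes on $|\Phi|$ but is nonzero at $x$. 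The plan here is to choose a rational simplicial cone $C$ containing a small open neighborhood of $\pos(x)$ disjoint from $|\Phi|$, apply Lemma \ref{lemma:smooth} to the set $|\Phi|\cup C$ to obtain a regular fan, and then use Lemma \ref{lemma:extension} to extend to $\Rn$ a $\mathsf B$-map that is zero on the cones inside $|\Phi|$ and strictly positive on the interior of $C$.
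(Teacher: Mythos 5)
Your argument is correct, and it reaches the statement by a genuinely different route than the paper. The paper's (sketched) proof identifies maximal $\ell$-ideals with rays geometrically: it takes the intersection of the zerosets $Zf$ over all $f\in\mathfrak m$ and argues this intersection is exactly one ray, the inverse of the assignment $\rho\mapsto\mathfrak m_\rho$; the general case over $|\Phi|$ is then declared ``similar''. You instead prove surjectivity representation-theoretically: the quotient by a maximal $\ell$-ideal is simple, hence totally ordered and archimedean, hence embeds in $\mathbb R$ by H\"older, and the universal property of the free $\ellg$ forces the resulting $\ell$-homomorphism $\mathcal A_n\to\mathbb R$ to be evaluation at a point $x$; the factorization through the restriction map, together with an explicit separating $\mathsf B$-map built from Lemmas \ref{lemma:smooth} and \ref{lemma:extension} (zero on $|\Phi|$, given by a strictly positive integer form on a rational simplicial cone around $\pos(x)$, glued along the intersection $\{0\}$), pins $x$ inside $|\Phi|$. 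This buys a uniform treatment of the general case $\mathcal A_n\restrict|\Phi|$ and makes the topological identification cleanly automatic (continuous bijection from a compact space to a Hausdorff space), at the cost of invoking H\"older's theorem, whereas the paper's route is shorter and purely geometric. Two small patches: in the injectivity step the separating integer form comes from the density of \emph{rational} directions (equivalently, rational hyperplanes), not from ``density of $\mathbb Z^n$ in $\Rn$''; and in the surjectivity step you should note explicitly that $x\neq 0$ (since $\mathfrak m$ is proper and the restriction map is onto, the composite homomorphism is nonzero), and likewise that $\varepsilon_x$ has nonzero image for $x\in|\Phi|\cap S^{n-1}$ because some coordinate function does not vanish at $x$ --- both are one-line observations, not gaps.
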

\begin{proof} 
Although this result is well known to specialists,
we sketch the proof to help the reader. 
We will tacitly  use  \eqref{equation:extension} 
in  {Lemma \
 \ref{lemma:extension}}.
 Given a vector $v\in \Rn$, the {\it ray}\,\, $\mathbb R_{\geq 0}v$
 is the halfline  $\{\lambda v \in \Rn\mid 0\leq  \lambda \in \mathbb R\}$.
For every ray  $\rho\subseteq \Rn,$
the set of all functions in $\mathcal A_n$ vanishing
over $\rho$ is a maximal $\elli$ $\mathfrak m _\rho$ of  $\mathcal A_n$.
Conversely, for every maximal $\elli$  $\mathfrak m$ of
$\mathcal A_n$ the intersection of the zerosets  
\begin{equation}
\label{equation:zeroset}
Zf=f^{-1}(0)
\end{equation}  of
all functions  $f\in \mathfrak m$ is a ray  $\rho_\mathfrak m$
in $\Rn.$  (As a matter of fact,
if  $Zf=\{0\}$ then $\mathfrak m=\mathcal A_n$,
against the definition of $\mathfrak m$. If $Zf$ contains
two or more rays  $\rho,\rho'$, then $\mathfrak m$ is 
strictly contained in $\mathfrak m_\rho$, which is impossible.)
The  maps  $\rho\mapsto \mathfrak m_\rho$ and 
$\mathfrak m \mapsto \rho_\mathfrak m$ are the inverse of
each other. 
Evidently,  rays are in one-one correspondence with their
intersections with $S^{n-1}$.
The definition 
\eqref{equation:spectral}
of the maximal spectral topology
is to ensure that the correspondence
$\mathfrak m\mapsto \rho_\mathfrak m\cap S^{n-1}$ is a homeomorphism.

The general case with $|\Phi|$ in place of $\Rn$ is proved in a similar way.
\end{proof}

\section{Applications of Baker-Beynon duality}
Building on previous work by Baker \cite{bak} on vector lattices,
Beynon  
constructed a duality $\mathcal D$ between
 pointset unions of finite sets of 
 rational  simplicial cones  with their  $\mathsf B$-maps, and
finitely presented $\ell$-groups with their $\ell$-homomorphisms,
\cite[Corollaries 2-3]{bey77due}.
In our fan-theoretic framework, in view of
 {Lemma \ref{lemma:smooth}}
 the functor $\mathcal D$  has the following
equivalent definition:

\medskip

\begin{description}
\item[{\it Objects}]
  For any   fan 
 $\Delta$ in  $\Rn$, 
\begin{equation}
\label{equation:functor-objects}
\mathcal D(|\Delta|)=\{f\restrict 
|\Delta| \mid f\in \mathcal A_n\}=\mathcal A_n\restrict |\Delta|=\{
\mbox{the $\ellg$ of all    $\mathsf B$-maps on $ |\Delta|$}\}.
\end{equation}

\medskip
\item[{\it Arrows}]
Given  fans  $\Delta$ in $\Rn$ and $\nabla$ in $\Rm$, 
and a $\mathsf B$-map  $b \colon |\Delta| \to |\nabla|$, 
\begin{equation}
\label{equation:functor-arrows}
\mathcal D(b) \colon f\in \mathcal D(|\nabla|)\mapsto f \circ b  \in 
\mathcal D(|\Delta|),  \mbox{ for short, } \mathcal D(b)=-\circ b
\colon  \mathcal D(|\nabla|)\to  \mathcal D(|\Delta|),
\end{equation}
where ``$\circ$'' denotes composition.
\end{description}

\bigskip
\noindent
In  particular,  $\mathcal D(\Rn)=\mathcal A_n$, and
$\mathcal D(\{0\})$ is the trivial  $\ellg$  $\{0\}$. 

\bigskip
Using {Lemma \ref{lemma:extension}}, Beynon's main
result     \cite[Corollary 2]{bey77due}  can be equivalently  stated as follows:

\begin{theorem}[Duality]
\label{theorem:duality}
$\mathcal D$  determines  a duality  between
supports of  fans with their  $\mathsf B$-maps, and
finitely presented $\ell$-groups with their $\ell$-homomorphisms.
Equivalently, by  {  Lemma \ref{lemma:smooth}}, 
$\mathcal D$  gives a duality  between
supports of {\em regular}  fans with their  $\mathsf B$-maps, and
finitely presented $\ell$-groups with their $\ell$-homomorphisms.
\end{theorem}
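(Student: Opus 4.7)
The plan is to derive this as a direct reformulation of Beynon's original result \cite[Corollary 2]{bey77due}, using the two earlier lemmas to translate between Beynon's language and our fan-theoretic setup. First I would verify that $\mathcal D$ is a well-defined contravariant functor. On objects, Lemma \ref{lemma:extension} identifies $\mathcal D(|\Delta|) = \mathcal A_n\restrict|\Delta|$ with the $\ell$-group of all $\mathsf B$-maps on $|\Delta|$, which realizes the quotient $\mathcal A_n/\mathcal I_{|\Delta|}$ where $\mathcal I_{|\Delta|}=\{f\in\mathcal A_n \mid f\restrict|\Delta|=0\}$; Beynon's representation of finitely presented $\ell$-groups ensures $\mathcal I_{|\Delta|}$ is a finitely generated $\ell$-ideal, so $\mathcal D(|\Delta|)$ is finitely presented. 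On arrows, since the $\ell$-group operations are pointwise and composites of $\mathsf B$-maps are $\mathsf B$-maps, precomposition $-\circ b$ is an $\ell$-homomorphism $\mathcal D(|\nabla|)\to\mathcal D(|\Delta|)$, and functoriality follows from associativity of composition.

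Next I would show $\mathcal D$ is a contravariant equivalence by checking essential surjectivity, faithfulness, and fullness. Essential surjectivity is the Baker-Beynon representation theorem: every finitely presented $\ell$-group is of the form $\mathcal A_n\restrict|\Delta|$ for some fan $\Delta$ in $\Rn$. Faithfulness is immediate: if $-\circ b = -\circ b'$, then applying this equality to the coordinate projections $\pi_i\restrict|\nabla|\in\mathcal D(|\nabla|)$ yields $b(x)=b'(x)$ for all $x\in|\Delta|$. Fullness---that every $\ell$-homomorphism $\varphi\colon\mathcal D(|\nabla|)\to\mathcal D(|\Delta|)$ is of the form $-\circ b$ for a unique $\mathsf B$-map $b\colon|\Delta|\to|\nabla|$---is the technical core of Beynon's theorem. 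The candidate is $b(x)=\bigl(\varphi(\pi_1\restrict|\nabla|)(x),\ldots,\varphi(\pi_m\restrict|\nabla|)(x)\bigr)$; one then verifies that $b$ indeed lands in $|\nabla|$ (because $\varphi$ respects the defining $\ell$-ideal $\mathcal I_{|\nabla|}$, so any $\mathsf B$-map vanishing on $|\nabla|$ vanishes on $b(|\Delta|)$) and that $\varphi=-\circ b$ on the generators $\pi_i\restrict|\nabla|$, hence on all of $\mathcal D(|\nabla|)$.

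The only genuinely new point beyond invoking \cite[Corollary 2]{bey77due} is the reconciliation of vocabulary: Beynon works with pointset unions of finite families of rational simplicial cones, whereas here we work with supports of fans. Lemma \ref{lemma:smooth} provides the bridge, showing that these two classes of subsets of $\Rn$ coincide. Applying this lemma once yields the first assertion of the theorem; applying its refined form---that the fan $\nabla$ produced can always be taken regular---yields the ``equivalently'' clause, since restricting $\mathcal D$ to regular fans does not shrink the essential image.

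The main obstacle is the fullness step, but this is not obstructed here because it is precisely the content of Beynon's \cite[Corollary 2]{bey77due}; our task is purely one of translation. A minor subtlety worth double-checking is that the finite generation of $\mathcal I_{|\Delta|}$ holds for arbitrary (not merely regular) fans---this is why the first formulation can be stated without invoking regularity---and this is exactly what is established in \cite[Theorem 3.1]{bey77due}, whose use is already invoked in the proof of Lemma \ref{lemma:extension}.
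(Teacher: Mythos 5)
Your proposal is correct and follows essentially the same route as the paper: the theorem is simply Beynon's result \cite[Corollary 2]{bey77due} restated, with Lemma \ref{lemma:extension} identifying $\mathcal D(|\Delta|)$ with the $\ell$-group of $\mathsf B$-maps on $|\Delta|$ and Lemma \ref{lemma:smooth} bridging ``pointset unions of rational simplicial cones'' with ``supports of (regular) fans.'' The extra categorical bookkeeping you supply (functoriality, faithfulness, fullness) is a harmless elaboration of what the paper, like Beynon, treats as already established.
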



\begin{lemma}
\label{lemma:gen-range}
Given elements $f_1,\ldots  f_n\in \mathcal A_m$
let  $f\colon \Rm\to \Rn$ be defined by
$
f(x)=(f_1(x),\ldots,f_n(x)), \mbox{ for all } x=(x_1,\ldots,x_m)\in \Rm.
$
Let  
\begin{equation}
\label{equation:gen}
\gen(f)=\gen(f_1,\ldots,f_n)
\end{equation}
 be the
$\ell$-subgroup of $\mathcal A_m$ generated by $f_1,\ldots,f_n.$ Then
$\gen(f)\cong \mathcal D(\range(f)).$
\end{lemma}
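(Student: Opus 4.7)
The plan is to exhibit the isomorphism as the pullback functor $\mathcal D(f) = -\circ f$ applied to a suitable fan carrier, and identify its image and kernel directly.

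First I would verify that $\range(f) \subseteq \mathbb R^n$ is the support of a (regular) fan, so that $\mathcal D(\range(f))$ is defined via \eqref{equation:functor-objects}. Since each $f_i \in \mathcal A_m$ is a $\mathsf B$-map, there is a common fan $\Delta$ in $\mathbb R^m$ on every cone $C$ of which the coordinate functions $f_1,\ldots,f_n$ simultaneously agree with linear forms of integer coefficients; thus $f$ restricts to a linear map $\ell_C$ with integer coefficients on each $C\in\Delta$. Then $f(C)=\ell_C(C)$ is the positive hull of the integer vectors $\ell_C(d_1),\ldots,\ell_C(d_t)$ (where $d_1,\dots,d_t$ are the primitive generating vectors of $C$), hence a rational cone. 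Any rational cone can be triangulated into rational simplicial cones, so $\range(f)=\bigcup_{C\in\Delta}f(C)$ is a finite union of rational simplicial cones. By Lemma \ref{lemma:smooth}, $\range(f)=|\nabla|$ for some regular fan $\nabla$ in $\mathbb R^n$.

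Next, I would regard $f$ as a surjective $\mathsf B$-map $\mathbb R^m\to|\nabla|$ and apply the arrow part \eqref{equation:functor-arrows} of the duality, obtaining an $\ell$-homomorphism
\[
\mathcal D(f)\colon \mathcal D(|\nabla|)\longrightarrow \mathcal D(\mathbb R^m)=\mathcal A_m,\qquad g\longmapsto g\circ f.
\]
To identify the image, I would use the restrictions $\tilde\pi_i := \pi_i\restrict|\nabla|$ of the coordinate functions of $\mathbb R^n$. Since $\pi_1,\ldots,\pi_n$ generate $\mathcal A_n$, and Lemma \ref{lemma:extension} gives $\mathcal A_n\restrict|\nabla|=\mathcal D(|\nabla|)$ with restriction an $\ell$-homomorphism, the elements $\tilde\pi_1,\ldots,\tilde\pi_n$ generate $\mathcal D(|\nabla|)$ as an $\ell$-group. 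Since $\mathcal D(f)(\tilde\pi_i)=\pi_i\circ f=f_i$, the image of $\mathcal D(f)$ contains $f_1,\ldots,f_n$, hence contains $\gen(f)=\gen(f_1,\ldots,f_n)$; conversely every element of $\mathcal D(|\nabla|)$ is an $\ell$-group expression in the $\tilde\pi_i$, whose image under $\mathcal D(f)$ is the same expression in the $f_i$, so lies in $\gen(f)$.

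Finally I would check injectivity directly from surjectivity of $f\colon\mathbb R^m\to|\nabla|$: if $g_1,g_2\in\mathcal D(|\nabla|)$ satisfy $g_1\circ f=g_2\circ f$, then for every $y\in|\nabla|=\range(f)$ there is $x\in\mathbb R^m$ with $f(x)=y$, so $g_1(y)=g_2(y)$; hence $g_1=g_2$. Putting the three pieces together, $\mathcal D(f)$ is an injective $\ell$-homomorphism with image $\gen(f)$, yielding $\gen(f)\cong\mathcal D(\range(f))$. The only non-routine step is the first one, namely showing that $\range(f)$ is a finite union of rational simplicial cones; everything afterwards is a straightforward consequence of the duality and the fact that $f$ is surjective onto its range.
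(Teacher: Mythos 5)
Your proof is correct and follows essentially the same route as the paper's: the map $g\mapsto g\circ f$, injectivity from surjectivity of $f$ onto its range, and identification of the image with $\gen(f)$ via Lemma \ref{lemma:extension}. Your preliminary verification that $\range(f)$ is a finite union of rational simplicial cones (so that $\mathcal D(\range(f))$ is defined) is a reasonable extra fill-in, which the paper handles separately in Lemma \ref{lemma:range=support}.
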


\begin{proof}
Let    $R=\range(f)$.
The map
$l \in \mathcal D(R)\mapsto l\circ f \in \mathcal A_m$
 yields an $\ell$-ho\-m\-o\-mor\-ph\-ism
of $\mathcal D(R)$ into $\gen(f).$
The map is one-one because if  $0\not=g \in \mathcal D(R)$
(say, $g(x)\not= 0$ for some $x\in R$) 
then $0\not=g \circ f(z)$, where
$z$ is  such that $f(z)=x$.
The map  is onto $\gen(t)$ because, by
{  Lemma \ref{lemma:extension}},   every   $h \in \gen(f)$
has the form $g= s \circ f$ for some  $s\in \mathcal A_n$.
\end{proof}

\begin{lemma}
[Fans over zerosets]
\label{lemma:zeroset=support}
For every $f\in \mathcal A_n$  there is
a  regular  fan $\Phi$ such that
$|\Phi|=Zf=f^{-1}(0).$  
 Once $f$ is specified as $f=\hat t$
for some $\ell$-group term  $t(X_1,\ldots,X_n)$,  the map
$t\mapsto \Phi$ is computable.
\end{lemma}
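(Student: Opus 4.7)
The plan is to exploit the piecewise-linear structure of $\hat t$ directly. By structural induction on the term $t$, I will compute finitely many $\mathbb Z$-linear forms $\ell_1,\ldots,\ell_k\colon\Rn\to\mathbb R$ together with a rational simplicial fan $\Sigma$ in $\Rn$ such that $\hat t$ restricts to one of the $\ell_i$ on each cone $C\in\Sigma$. The base case is trivial: $\widehat{X_j}=\pi_j$ is already linear, and the standard orthant decomposition provides an initial fan in $\Rn$. For the inductive step, given fans $\Sigma_1,\Sigma_2$ adapted to $t_1,t_2$, the common refinement immediately handles $t_1\pm t_2$; for $t_1\vee t_2$ and $t_1\wedge t_2$ one must further subdivide along the hyperplanes $\ell_{i_1}-\ell_{i_2}=0$ on each cone of the common refinement, so that one argument dominates the other on each resulting piece. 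Such hyperplane-cut refinements of a rational simplicial fan are effected combinatorially by stellar subdivision at primitive integer vectors lying on the intersection, extracted by clearing denominators in the solutions of the relevant linear systems.

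Having fixed $\Sigma$ in this way, I will refine it once more so that on each cone the associated form $\ell_{i(C)}$ has constant sign. Since $\ell_{i(C)}$ is $\mathbb Z$-linear and $C$ is generated by primitive vectors $v_1,\ldots,v_s$, the sign of $\ell_{i(C)}$ on $C$ is determined by the signs $\ell_{i(C)}(v_j)$; if these signs differ, a further sequence of stellar subdivisions along $\{\ell_{i(C)}=0\}$ separates the regions. The outcome is a rational simplicial fan $\Sigma'$ on which each restriction $\hat t|_C$ is an integer linear form of constant sign.

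Now let $\Psi=\{C\in\Sigma' : \hat t\equiv 0 \text{ on } C\}$. This family is closed under faces and is itself a fan. I claim $|\Psi|=Z\hat t$: on each $C\in\Sigma'$, since $\ell_{i(C)}$ has constant sign on $C$, its zero set inside $C$ is precisely the subface $\pos\{v_j \mid \ell_{i(C)}(v_j)=0\}$, which is a cone already present in $\Sigma'$ and a fortiori in $\Psi$. Hence every zero of $\hat t$ lies in some cone of $\Psi$, and conversely. Finally, applying Lemma \ref{lemma:smooth} to the support $|\Psi|$ produces the desired regular fan $\Phi$ with $|\Phi|=Z\hat t$. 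Each intermediate step is effective in the term $t$, so $t\mapsto\Phi$ is computable.

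The main obstacle is purely bookkeeping in the inductive step for $\vee$ and $\wedge$: one must explicitly produce the primitive integer generators on the cutting hyperplanes in order to perform the stellar subdivisions, which requires some care but is entirely routine linear algebra over $\mathbb Z$. Everything downstream—sign refinement, extraction of the zero subcomplex, and desingularization via Lemma \ref{lemma:smooth}—is finite combinatorial manipulation on cone data.
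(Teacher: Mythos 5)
Your proof is correct, and its skeleton is the same as the paper's: build a rational simplicial fan over which $\hat t$ is linear, extract the subfan of cones on which $\hat t$ vanishes identically, and obtain regularity by desingularization. The implementations differ in two respects worth noting. First, the paper does not construct the linearizing fan by structural induction with stellar hyperplane cuts; it lists the linear pieces $l_1,\dots,l_u$ of $\hat t$, forms the ordering cones $C_\pi=\{x\mid l_{\pi(1)}(x)\leq\cdots\leq l_{\pi(u)}(x)\}$, and then invokes the prefabricated sequence of \emph{regular} fans of \cite[Lemma 3.7]{mun-tams} to get a regular refinement of this arrangement in one stroke; since that fan is already regular, the vanishing subfan is regular and no final appeal to Lemma \ref{lemma:smooth} is needed, whereas you work with a possibly singular simplicial fan and call Lemma \ref{lemma:smooth} at the end --- both routes are legitimate, yours being more self-contained (elementary stellar subdivisions plus Lemma \ref{lemma:smooth}) and the paper's delegating the combinatorial work to \cite{mun-tams}. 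Second, your explicit constant-sign refinement, together with the observation that the zero set of a form of constant sign on a simplicial cone is the face spanned by the generators where it vanishes, is exactly what guarantees that $Z\hat t$ is a union of cones of the fan; the paper leaves this point implicit in its choice of linearizing fan, so making it explicit is a genuine merit of your write-up. One small repair: the common refinement of two simplicial fans used in your $\pm$ step need not be simplicial; either re-triangulate it by the same hyperplane-cut subdivisions you already use for $\vee$ and $\wedge$, or (as the paper does) skip fans during the induction altogether, carrying along only the list of linear pieces and building the fan once at the end from the resulting arrangement. With that routine adjustment every step is effective in $t$, and the computability claim stands.
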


\begin{proof}  We first  construct a regular fan 
$\Lambda_f$ in $\Rn$  that {\it linearizes} $f$,
in the sense that $f$ is linear over each cone of $\Lambda_f.$
To this purpose, we list the linear pieces  $l_1,\dots,l_u$
of $f$.  Once $f$ is specified as  $f=\hat t$ for some  
$n$-varlable 
$\ellt$  $t$, the $l_i$ are obtained effectively---by
induction on the number of operation symbols in $t$.
Next, for each permutation $\pi$ of the index set 
$\{1,\dots,u\}$ we let the cone  $C_\pi\subseteq \Rn$ be defined by
$$
C_\pi = \{x\in \Rn\mid l_{\pi(1)}(x)\leq\cdots\leq l_{\pi(u)}(x)\}.
$$
For any such $\pi$, $f$ is linear over  $C_\pi.$ We next compute
the sequence  of regular fans $\Delta_1,\Delta_2,\dots$ 
defined in \cite{mun-tams}, 
until    $\Delta_j$  has the property that
every  $C_\pi$ is a union of cones of  $\Delta_j.$  The existence
of  such $j$ is ensured by  
 \cite[Lemma 3.7]{mun-tams}. The fan $\Delta_j$  provides
 the desired linearization $\Lambda_f$.  
We finally  let
$\Phi=\{C\in\Lambda_f \mid \mbox{$f$ constantly vanishes over $C$}\}$.
By construction, the map $\Lambda_f\mapsto \Phi$ is computable, whence
so is the map $t\mapsto \Phi$.
\end{proof}

\begin{lemma}
[Fans over $\mathsf B$-images]
\label{lemma:range=support}  Let $b\colon \Rm\to \Rn$
be a $\mathsf B$-map. Then
there is
a  regular  fan $\Theta$ such that
$|\Theta|=\range(b)$. Once  $b$ is
specified as \,\, $b=\hat t=(\hat t_1,\ldots,\hat t_n)$\,\, for
  $\ell$-group terms  \,\,$t_i(X_1,\ldots,X_m)$, the map
$(t_1,\dots,t_n)\mapsto \Theta$ is computable.
\end{lemma}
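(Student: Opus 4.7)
The plan is to first linearize the $\mathsf B$-map $b$ over its domain $\Rm$, and then show that $\range(b)$ is a finite union of rational simplicial cones, whence Lemma~\ref{lemma:smooth} immediately delivers the desired regular fan $\Theta$ and its computability.

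First I would construct a regular fan $\Lambda$ in $\Rm$ that simultaneously linearizes every component $\hat t_j$ of $b$. The key observation is that the argument used in the proof of Lemma~\ref{lemma:zeroset=support} for a single $\hat t$ applies \emph{verbatim} when one records the linear pieces $l_1,\ldots,l_u$ of all the $\hat t_j$'s into a single list: forming the cones $C_\pi=\{x\in\Rm\mid l_{\pi(1)}(x)\le\cdots\le l_{\pi(u)}(x)\}$ for every permutation $\pi$ of $\{1,\dots,u\}$ yields a finite family on each of whose cones \emph{every} $\hat t_j$, and hence $b$ itself, acts as an integer linear map; a sufficiently refined fan $\Delta_j$ from \cite{mun-tams} then has each $C_\pi$ as a union of cones, and I can take $\Lambda:=\Delta_j$. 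Since the $l_i$ are extracted effectively from the terms $t_j$ by induction on their complexity, and since the sequence $\Delta_1,\Delta_2,\ldots$ and the stopping index $j$ are computable, the fan $\Lambda$ is produced effectively from $(t_1,\ldots,t_n)$.

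Next, for each $C=\pos(v_1,\ldots,v_k)\in\Lambda$ let $M_C$ be the integer matrix expressing $b$ on $C$; then
\[
b(C)=\pos(M_Cv_1,\ldots,M_Cv_k)\subseteq\Rn
\]
is a rational cone (with explicitly computable integer generating vectors, which can be replaced by their primitive rescalings). In general $b(C)$ need not be simplicial, but every rational cone is a finite union of rational simplicial cones via a standard combinatorial subdivision of its face lattice, and this subdivision is computable from the integer generators. Consequently
\[
\range(b)=\bigcup_{C\in\Lambda} b(C)
\]
is a finite union of rational simplicial cones in $\Rn$, produced effectively from $(t_1,\ldots,t_n)$. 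Note that each $b(C)$, being a finitely generated polyhedral cone, is closed, so the finite union $\range(b)$ is closed.

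Finally, I would invoke Lemma~\ref{lemma:smooth}: the finite union of rational simplicial cones $\range(b)$ is the support of a regular fan $\Theta$ in $\Rn$, and the map from the generating data of these cones to $\Theta$ is computable (via the effective desingularization procedure recalled in the proof of Lemma~\ref{lemma:smooth}). Composing all computable steps yields the desired effective map $(t_1,\ldots,t_n)\mapsto\Theta$. The only step where anything could go wrong is the linearization in $\Rm$; this is handled by direct appeal to the construction and termination lemma of \cite{mun-tams} as in Lemma~\ref{lemma:zeroset=support}, so in the end the proof reduces to assembling previously established ingredients rather than to any new argument.
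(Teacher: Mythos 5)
Your proposal is correct and follows essentially the same route as the paper: linearize $b$ via the permutation-cone construction and \cite[Lemma 3.7]{mun-tams} exactly as in Lemma~\ref{lemma:zeroset=support}, take the images of the cones of the linearizing fan (positive hulls of the images of the generators, computable from the terms), and feed the resulting finite union of rational cones to the effective procedure of Lemma~\ref{lemma:smooth}. Your explicit remark that each image cone $b(C)$ need not be simplicial and must first be subdivided into rational simplicial cones is a welcome extra detail that the paper's proof leaves implicit.
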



\begin{proof} 
Using
 \cite[Lemma 3.7]{mun-tams}   
 as in the proof of 
{Lemma \ref{lemma:zeroset=support}}, we first
compute a regular fan $\Lambda_b$ in $\Rm$ that linearizes
$b$. 
 The image  $b(C_i)\subseteq \Rn$ of each cone $C_i\in \Lambda_b$
is  the positive hull of uniquely given
integer vectors  $v_{i1},\ldots,v_{is_{i}}\in \Rn,$  which 
are effectively computable from $C_i$ and $t$.
 The effective procedure
of {Lemma \ref{lemma:smooth}} now provides the required
regular fan $\Theta$ in $\Rn$ with 
$|\Theta|=\range(b)=\bigcup\{C_i\mid C_i\in
 \Lambda_b \}$.
So the map $t\mapsto \Theta$ is computable.
\end{proof}


\medskip
For later use we state
Beynon's characterization of finitely
generated projective $\ellg$s \cite[Theorem 3.1]{bey77due}
in the following expanded form:

\begin{theorem} [Characterization of projectives]
\label{theorem:projective} 
For any $n$-generator $\ell$-group  $G$ the following conditions are
equivalent:
\begin{itemize}
\item[(a)] $G$ is  projective.
\item[(b)]  $G$ is $\ell$-isomorphic to a finitely
presented $\ell$-group  $\mathcal A_n/\mathfrak p$,
where $\mathfrak p$  is a principal $\ell$-ideal 
of $\mathcal A_n$,
say, $\mathfrak p=\langle p \rangle =$ the $\ell$-ideal generated
by   $p\in \mathcal A_n$.
\item[(c)]  
$G$ is $\ell$-isomorphic to the   $\ell$-group\,  $\phi(\mathcal A_n)$, where
$\phi\colon \mathcal A_n\to \mathcal A_n$ is an  
idempotent endomorphism.
\item[(d)]   
$G$ is $\ell$-isomorphic to
$\mathcal D(O)$, where $O$ is the zeroset $Zf=f^{-1}(0)$
of some  $f\in \mathcal A_n$.

\end{itemize}
\end{theorem}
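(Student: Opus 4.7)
The plan is to close the chain of equivalences via (a)$\Leftrightarrow$(c), (b)$\Leftrightarrow$(d), and (c)$\Leftrightarrow$(d); the first two are essentially formal, and (d)$\Rightarrow$(c) is the main geometric obstacle. For (a)$\Rightarrow$(c): since $G$ is $n$-generated, a choice of generators gives a surjective $\ell$-homomorphism $\pi\colon\mathcal{A}_n\twoheadrightarrow G$, and projectivity of $G$ applied to $\pi$ and $\id_G$ yields a section $\sigma$ with $\pi\sigma=\id_G$, so that $\phi=\sigma\pi$ is idempotent with image $\sigma(G)\cong G$. Conversely, if $G\cong\phi(\mathcal{A}_n)$ with $\phi^2=\phi$, the inclusion $\phi(\mathcal{A}_n)\hookrightarrow\mathcal{A}_n$ together with $\phi$ realizes $G$ as a retract of the free (hence projective) $\ell$-group $\mathcal{A}_n$, so $G$ is projective.

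For (b)$\Leftrightarrow$(d): the principal $\ell$-ideal $\langle p\rangle=\langle|p|\rangle$ coincides with $\{g\in\mathcal{A}_n:Zp\subseteq Zg\}$. One inclusion is immediate from $|g|\le k|p|$. For the other, linearize both $p$ and $g$ on a common regular fan (cf.\ the proof of Lemma~\ref{lemma:zeroset=support}); on each cone $C$ with primitive generators $d_1,\dots,d_t$, those $d_i$ with $p(d_i)=0$ lie in $Zp\subseteq Zg$, so $g(d_i)=0$ there, and taking $k_C=\lceil\max\{|g(d_i)|/p(d_i):p(d_i)>0\}\rceil$ yields a linear bound $|g|\le k_C|p|$ on $C$; finiteness of the number of cones then produces a uniform integer $k$. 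Hence the restriction map $\mathcal{A}_n\to\mathcal{A}_n\restrict Zp$, surjective by Lemma~\ref{lemma:extension}, has kernel $\langle p\rangle$, giving $\mathcal{A}_n/\langle p\rangle\cong\mathcal{D}(Zp)$; this establishes (b)$\Leftrightarrow$(d) in both directions.

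For (c)$\Leftrightarrow$(d): dualizing an idempotent $\ell$-endomorphism $\phi$ under Theorem~\ref{theorem:duality} gives an idempotent $\mathsf B$-map $r=(r_1,\dots,r_n)\colon\Rn\to\Rn$ whose range $R$ equals its fixed-point set. Setting $f=\sum_{i=1}^{n}|\pi_i-r_i|\in\mathcal{A}_n$ one has $Zf=R$, and since $\phi(\pi_i)=\pi_i\circ r=r_i$, it follows that $\phi(\mathcal{A}_n)=\gen(r_1,\dots,r_n)\cong\mathcal{D}(R)=\mathcal{D}(Zf)$ by Lemma~\ref{lemma:gen-range}; this yields (c)$\Rightarrow$(d). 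The reverse implication is the technical crux: given $Zf=|\Phi|$ for a regular fan $\Phi$ (Lemma~\ref{lemma:zeroset=support}), one must construct a $\mathsf B$-retraction $r\colon\Rn\to|\Phi|$ fixing $|\Phi|$ pointwise, after which $\phi=-\circ r$ is the desired idempotent with image $\mathcal{A}_n\restrict|\Phi|\cong\mathcal{D}(Zf)\cong G$. The main obstacle is precisely this construction: an arbitrary piecewise-linear retraction is routine, but forcing each linear piece to have integer coefficients requires the regularity of $\Phi$. I would invoke Beynon's original construction \cite[Theorem~3.1]{bey77due}, refining $\Phi$ together with a regular fan on the complementary region to a common regular fan $\nabla$ of $\Rn$ and then defining $r$ cone by cone, sending each cone of $\nabla$ not in $\Phi$ linearly onto an appropriate face of $\Phi$ via the integer basis supplied by the regularity of $\nabla$, so that both integrality of the coefficients and continuous matching on shared faces are secured.
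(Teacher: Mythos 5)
Your argument is correct, but it closes the cycle differently from the paper. The paper's proof is very short: it quotes Beynon's \cite[Theorem 3.1]{bey77due} verbatim for (a)$\Leftrightarrow$(b), dismisses (a)$\Leftrightarrow$(c) as universal-algebra folklore (projective $=$ retract of free, exactly your argument), and gets (b)$\Leftrightarrow$(d) from the canonical isomorphism $\mathcal A_n/\langle p\rangle\cong\mathcal A_n\restrict Zp$, i.e.\ from the characterization $q\in\langle p\rangle\Leftrightarrow Zp\subseteq Zq$ that you prove in detail. You instead route the equivalence through (a)$\Leftrightarrow$(c), (b)$\Leftrightarrow$(d) and (c)$\Leftrightarrow$(d); your direct (c)$\Rightarrow$(d) via the idempotent $\mathsf B$-map $r=(\phi(\pi_1),\dots,\phi(\pi_n))$ and $f=\sum_i|\pi_i-r_i|$, with $Zf=\range(r)$ and Lemma~\ref{lemma:gen-range}, is a nice self-contained step the paper does not spell out. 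The hard content is the same in both treatments: your crux (d)$\Rightarrow$(c), the existence of a $\mathsf B$-retraction of $\Rn$ onto the zeroset, is precisely the dual form of Beynon's (b)$\Rightarrow$(a), so citing \cite[Theorem 3.1]{bey77due} there puts you on the same footing as the paper, which also does not reprove it. Two small points of care: in your $\langle p\rangle=\{g:Zp\subseteq Zg\}$ argument the triangle-inequality step $|g(x)|\le\sum_i\lambda_i|g(d_i)|\le k_C\sum_i\lambda_i|p(d_i)|=k_C|p(x)|$ needs $p$ of constant sign on each cone, so you should linearize $|p|$ (equivalently, replace $p$ by $|p|$, as your remark $\langle p\rangle=\langle|p|\rangle$ permits) rather than just $p$; and your sketch of the retraction (``send each cone of $\nabla$ not in $\Phi$ linearly onto an appropriate face of $\Phi$'') would not by itself yield a continuous map --- matching on shared faces is the delicate part of Beynon's construction --- so that sentence should be read as motivation for the citation, not as a proof.
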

 
\begin{proof}
The equivalence 
(a)$\Leftrightarrow$(b) was proved by 
Beynon in  \cite[Theorem 3.1]{bey77due}.

\smallskip

(a)$\Leftrightarrow$(c) is a special case of a 
folklore result in universal 
algebra.

\smallskip
(b)$\Leftrightarrow$(d) follows from the canonical $\ell$-isomorphism
$\mathcal A_n/\langle p \rangle \cong  \mathcal A_n\restrict Zp$, where
$Zp$ denotes the zeroset of $p$. 
As a matter of fact, for  a function
$q \in \mathcal A_n$ to  belong to the principal $\elli$
 $\langle p \rangle\subseteq \mathcal A_n$ it is
necessary and sufficient that $Zq$ contains $Zp.$
Thus, functions  $r,r'$ satisfy the condition  $|r-r'|\in \langle p \rangle$
iff their restrictions to $Zp$ coincide.
(Caution: this argument
may fail  if  $\langle p \rangle$
is replaced by a nonprincipal ideal.)
\end{proof}

\begin{remark}
\label{remark:1}
  If in  Theorem \ref{theorem:main}
we assume $n>m$,  
the answer to the  problem  
 is automatically negative.  As a matter of fact,  since
 $\hat t$ is continuous, 
{ Lemma \ref{lemma:sphere} } 
yields
 $\dim(\maxspec(\mathcal D(\range(\hat t))))
 =\dim(\range(\hat t)\cap S^{n-1}) \leq m-1 < n-1
=\dim(\maxspec(\mathcal A_n)).$  
By {Lemma
 \ref{lemma:gen-range}}, $\dim(\maxspec(\mathcal D(\range(\hat t))))
 =\dim(\maxspec(\gen( \hat t)))\not=\dim(\maxspec(\mathcal A_n))$,
 whence $\maxspec(\gen( \hat t))$ is not homeomorphic to 
 $\maxspec(\mathcal A_n)$, and a fortiori 
$\gen( \hat t)$ is not  $\ell$-isomorphic to $\mathcal A_n$.
 Thus the nontrivial part of 
  { Theorem \ref{theorem:main}}  is when $n\leq m.$  
\end{remark}

\begin{remark}
\label{remark:2}
If in  the statement of Theorem \ref{theorem:main-due}
we assume  $n<l$,  the answer is automatically negative,
because  $\mathcal A_l$ does not have a generating set with
$n$ elements only. Otherwise (absurdum hypothesis),
by {Lemma \ref{lemma:sphere} } we have
$l-1=\dim(S^{l-1})=\dim(\maxspec(\mathcal A_l))\leq
 \dim(\maxspec(\mathcal A_n))=\dim(S^{n-1})=n-1,$
 which is impossible. So the nontrivial part of
 Theorem \ref{theorem:main-due} is when $n\geq l.$ 
\end{remark}

\noindent
{\it The abstract simplicial complex of a fan.}
Following \cite[p.244]{bey77due},  from any  (always
rational and simplicial)  fan  $\Phi$
we obtain the abstract simplicial complex  $\overline{\Phi}$
via the following construction:

For each cone $C\in \Phi$ let $\partial C$ denote the
primitive generating vectors of $C$.  Then the set $|\Phi|$
of points
of $\overline{\Phi}$ is the set  of primitive generating vectors
of all cones of $\Phi.$
More generally, an element of $\overline{\Phi}$ has the form
$\partial C$, where  $C$ ranges over the totality of cones of $\Phi$.  
 In symbols,
\begin{equation}
\label{equation:asc}
\overline{\Phi}= \{\partial C\mid C\in \Phi\},\,\,\,\,\,\, |\overline{\Phi}|
=\bigcup\{\partial C\mid C\in \Phi\}.
\end{equation}
 The   order  $\leq$  in  $\overline{\Phi}$  
is given by  the inclusion order between the sets  $\partial C$, for
$C\in\Phi.$ Since each primitive generating vector
of a cone in $\Phi$  has integer coordinates,
the  map $\Phi\mapsto \overline\Phi$ is  computable,
once each  cone of  $\Phi$  is specified via its primitive
generating vectors.  
Conversely, with  the notation of \eqref{equation:pos}, 
 $\Phi=\{\pos(K)\mid K\in \Phi\}$.

Two abstract simplicial complexes $\mathcal C,\mathcal C'$ are {\it isomorphic}
if there is a one-one map  $\beta$  of $|\mathcal C|$ onto $|\mathcal C'|$
such that 
for any two elements  
$a,b\in \mathcal C,$
$a\leq b$  iff 
$\beta(a)\leq\beta(b)$.  Isomorphism is decidable, through exhaustive search
over all one-one maps of $|\mathcal C|$ onto $|\mathcal C'|$.

\begin{lemma}
\label{lemma:combinatorial}
Given  fans  $\Phi$ in $\Rn$ and $\Psi$ in $\mathbb R^d$,
we have the following equivalent conditions:
\begin{itemize}
\item[(i)] $\mathcal D(|\Phi|)\cong\mathcal D(|\Psi|)$.

\smallskip
\item[(ii)] There is a $\mathsf B$-homeomorphism  $h$   of
$|\Phi|$ onto  $|\Psi|$,  say,  $h=\hat t$ for some $d$-tuple
of $n$-variable $\ell$-group terms.

\smallskip 
\item[(iii)]  $\Phi$ and   $\Psi$ have regular
subdivisions
$\Delta$ and   $\nabla$  with an isomorphism
$\beta$ of   $\overline{\Delta}$ onto
$\overline{\nabla}$.
\end{itemize}

\end{lemma}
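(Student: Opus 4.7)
The plan is to derive (i)$\Leftrightarrow$(ii) directly from the Baker--Beynon duality of Theorem \ref{theorem:duality}, and then to establish (ii)$\Leftrightarrow$(iii) by fan-theoretic arguments; the harder direction will be (ii)$\Rightarrow$(iii), whose proof rests on the fact that $\mathsf B$-homeomorphisms preserve regularity of cones.

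For (i)$\Leftrightarrow$(ii), the contravariant duality $\mathcal D$ of Theorem \ref{theorem:duality} sends isomorphisms in one category to isomorphisms in the other; the isomorphisms among supports of fans (equipped with $\mathsf B$-maps) are precisely the $\mathsf B$-homeomorphisms. Given such an $h\colon |\Phi|\to |\Psi|\subseteq\mathbb R^d$, each of its $d$ component functions $h_1,\ldots,h_d$ is a $\mathsf B$-map on $|\Phi|$, and by Lemma \ref{lemma:extension} extends to some $\hat t_i\in\mathcal A_n$ for an $n$-variable $\ell$-group term $t_i$; this yields $h=\hat t$.

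For (iii)$\Rightarrow$(ii), given regular subdivisions $\Delta,\nabla$ of $\Phi,\Psi$ together with an isomorphism $\beta\colon\overline{\Delta}\to\overline{\nabla}$, I define $h\colon|\Delta|\to|\nabla|$ on each cone $C=\pos(v_1,\ldots,v_k)\in\Delta$ by $h(\sum\lambda_iv_i)=\sum\lambda_i\beta(v_i)$. The face-preservation property of $\beta$ ensures consistency across common faces of cones, while regularity of $C$---so that $v_1,\ldots,v_k$ extend to a $\mathbb Z$-basis of $\mathbb Z^n$---guarantees that this linear piece has integer coefficients. Since $\beta^{-1}$ induces the inverse $\mathsf B$-map symmetrically, $h$ is the desired $\mathsf B$-homeomorphism.

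The main obstacle lies in (ii)$\Rightarrow$(iii). Given $h\colon|\Phi|\to|\Psi|$, I linearize $h$ on both sides: exactly as in the proof of Lemma \ref{lemma:range=support}, use \cite[Lemma 3.7]{mun-tams} to produce a regular fan $\Sigma$ refining $\Phi$ over whose cones $h$ is linear, together with a regular fan $\Xi$ refining $\Psi$ over whose cones $h^{-1}$ is linear; refine $\Sigma$ further to a fan $\Delta$ whose image $\nabla:=\{h(C)\mid C\in\Delta\}$ refines $\Xi$, and apply Lemma \ref{lemma:smooth} to desingularize $\Delta$ if necessary. The crucial claim is that $\nabla$ is then regular. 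Let $C=\pos(v_1,\ldots,v_k)\in\Delta$ be regular. Each $h(v_i)$ is a primitive integer vector: if $h(v_i)=mw$ for $w\in\mathbb Z^d$ and $m\geq 2$, then $h^{-1}(w)=v_i/m$ would be an integer vector, contradicting primitivity of $v_i$. Moreover, because both $h$ and $h^{-1}$ have integer coefficients, the lattice $\mathrm{span}(h(C))\cap\mathbb Z^d$ coincides with $\mathbb Z h(v_1)+\cdots+\mathbb Z h(v_k)$: for $w$ in this intersection, $h^{-1}(w)$ lies in $\mathrm{span}(C)\cap\mathbb Z^n=\mathbb Z v_1+\cdots+\mathbb Z v_k$ by regularity of $C$, so applying $h$ recovers $w$ as an integer combination of the $h(v_i)$. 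Since $\mathrm{span}(h(C))\cap\mathbb Z^d$ is a direct summand of $\mathbb Z^d$, the $h(v_i)$ extend to a $\mathbb Z$-basis of $\mathbb Z^d$, and $h(C)$ is regular. The assignment $v_i\mapsto h(v_i)$ on primitive generators then supplies the required isomorphism $\beta\colon\overline{\Delta}\to\overline{\nabla}$.
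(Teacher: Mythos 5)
Your proposal is correct and follows essentially the same route as the paper: (i)$\Leftrightarrow$(ii) via the Baker--Beynon duality (with Lemma \ref{lemma:extension} supplying the terms $t_i$), (iii)$\Rightarrow$(ii) by turning the simplicial isomorphism $\beta$ into an integral piecewise-linear homeomorphism using regularity of the subdivisions, and (ii)$\Rightarrow$(iii) by linearizing $h$ over a regular subdivision and taking the image fan $\{h(C)\mid C\in\Delta\}$ --- where you in fact prove the regularity of that image fan, a point the paper's proof essentially asserts. Only one phrasing needs care: for $w\in\mathrm{span}(h(C))\cap\mathbb Z^d$ the value $h^{-1}(w)$ need not be defined (such $w$ may lie outside $|\Psi|$), so you should apply instead the integer-linear piece of $h^{-1}$ on the cone of $\Xi$ containing $h(C)$, extended linearly to $\mathrm{span}(h(C))$ --- or equivalently run the argument inside the half-open parallelepiped spanned by $h(v_1),\ldots,h(v_k)$, which does lie in $h(C)$, in the spirit of Lemma \ref{lemma:regular}.
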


\begin{proof}  (i)$\Leftrightarrow$(ii) is 
an immediate consequence of {Theorem \ref{theorem:duality}}.

(iii)$\Rightarrow$(ii) follows from  \cite[Corollary 3]{bey77due}.
Precisely the assumed regularity of 
 $\Phi$ and   $\Psi$ ensures that the  existing  
 isomorphism  $\beta$ of  $\overline{\Delta}$ onto
$\overline{\nabla}$ determines
a piecewise linear homeomorphism $h_\beta$ of
 $|\Delta|$ onto $|\nabla|$ which linearly maps
 each cone of $C\in \Delta$ onto 
 a cone  $h(C)\in \nabla$. 
 Evidently,  $h_\beta$ is a $\mathsf B$-homeomorphism
 of $|\Delta|=|\Phi|$  onto  $|\nabla|=|\Psi|$.
Moreover,  every linear piece $l$
 of  $h_\beta$   can be expressed
 by a suitable  integer $d \times n$ matrix  $M_l$
 with  $l(x)=M_lx$ for all $x\in \Rn$. 
 Arguing as in \cite[Proposition 2.3]{mun-tams},
  we can write 
 $h_\beta=\hat t=(\hat t_1,\dots,\hat t_d)$ for suitable $\ellt$s   
 $t_i$. Specifically, each  $\hat t_i$ is expressible as a linear
 integer combination of
 $\Delta$-linear
 support functions (\cite[Definition, p.66]{oda}), called
  ``Schauder hats'' in \cite{mun-tams}.
   By direct inspection of \cite[\S 2]{mun-tams},     
  each $t_i$ is computable from the input data $\beta,\Delta,\nabla$. 

\smallskip
(ii)$\Rightarrow$(iii)  Let
$h$ be a  $\mathsf B$-map  
of $|\Phi|$  onto  $|\Psi|$, specified
as  $h=\hat t$  for a  $d$-tuple of
$\ellt$s
$t=(t_1,\dots,t_d)$, each $t_i$ in the variables  $X_1\dots,X_n.$
In order to
preliminarily check that  $\hat t$ is indeed a
$\mathsf B$-homeomorphism, 
we  compute a  regular subdivision $\Delta_h$ of
 $\Phi$    such that 
$h$ is linear on each cone  of $\Delta_h$, and
$h(C)$ is contained in some cone of $\Psi$, for 
each $C\in \Delta_h.$ 
For the effective computability of  $\Delta_h$  
we may again refer to the uniform linearization procedure of  
\cite[Lemma 3.7]{mun-tams}.
 Next, using { Lemma \ref{lemma:regular}}
   we check whether  the image 
  $$h(\Delta_h)=\{h(C)\mid C\in \Delta_h \}$$ 
is a {\it regular} subdivision of $\Psi$. This is necessary
and sufficient for $h$ to be a $\mathsf B$-homeomorphism
of $|\Phi|$  onto  $|\Psi|$.
The inverse map
$h^{-1}$ is linear over each cone  $D$  of
$h(\Delta_h)$, and maps $D$ onto a regular cone of
$\Delta_h$.  Upon setting 
 $\Delta=\Delta_h$ and  $\nabla=h(\Delta_h)$, from
the primitive generating vectors of the faces of
 $\Delta$ and  $\nabla$ we can easily transform
  $h$ into the desired  
isomorphism $\beta$ of  $|\overline \Delta|$ onto $|\overline \nabla|$.
The map $t\mapsto \beta$ is computable.
\end{proof}

\begin{theorem}[Undecidability]
\label{theorem:glamad}
 
The following problem is undecidable:

\smallskip
\noindent
${\mathsf{INSTANCE}:}$  An 
$\ell$-group  term  $u(X_1,\ldots,X_k)$ and an integer $l>0$. 
 
\smallskip
\noindent
${\mathsf{QUESTION}:}$
Letting $\langle \hat u \rangle$ be the $\ell$-ideal  of
$\mathcal A_k$ generated by $\hat u$, is  
  the quotient $\ell$-group  $\mathcal A_k/\langle \hat u \rangle$
$\ell$-isomorphic to   $\mathcal A_l$ ?
\end{theorem}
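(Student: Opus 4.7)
The plan is to translate the problem via Baker-Beynon duality into a PL sphere-recognition problem, and then reduce Markov's unrecognizability theorem to it.

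First, by the isomorphism $\mathcal A_k/\langle p\rangle \cong \mathcal A_k\restrict Zp$ used in the proof of Theorem \ref{theorem:projective}(d), the problem reformulates as: \emph{is $\mathcal D(Z\hat u)\cong \mathcal D(\mathbb R^l)$?} By the duality Theorem \ref{theorem:duality}, this amounts to the existence of a $\mathsf B$-homeomorphism $Z\hat u \to \mathbb R^l$. Applying $\maxspec$ via Lemma \ref{lemma:sphere} further reduces the question to whether the cross-section $Z\hat u \cap S^{k-1}$ is PL-equivalent to $S^{l-1}$, and Lemma \ref{lemma:combinatorial}(iii) translates this into a purely combinatorial condition on the abstract complexes of regular subdivisions of a fan $\Phi$ with $|\Phi|=Z\hat u$, where $\Phi$ is effectively computable from $u$ by Lemma \ref{lemma:zeroset=support}.

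Second, I would reduce Markov's unrecognizability theorem for $S^{l-1}$ (with $l-1 \geq 4$) to the above condition. Markov supplies an effective family $(K_i)$ of finite simplicial complexes for which PL-equivalence with $S^{l-1}$ is undecidable. From each $K_i$ I produce a regular fan $\Phi_i$ in $\mathbb R^{v_i}$ whose spherical cross-section realizes $|K_i|$: letting $v_i$ be the number of vertices of $K_i$, place each vertex at a distinct standard basis vector of $\mathbb R^{v_i}$ and send each $d$-simplex of $K_i$ to the positive hull of the $d{+}1$ corresponding basis vectors. Because every generating set of such a cone is part of the standard basis of $\mathbb Z^{v_i}$, the fan $\Phi_i$ is automatically regular; its support $C_i = |\Phi_i|$ is a rational polyhedral cone whose cross-section with $S^{v_i-1}$ is PL-homeomorphic to $|K_i|$.

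Third, I must realize $C_i$ as the zeroset of some explicit $\hat u_i \in \mathcal A_{v_i}$. For each maximal cone $K$ of $\Phi_i$, regularity lets one write down a nonnegative $\mathsf B$-map $g_K$, built from primitive integer forms cutting out the linear span of $K$ together with dual coordinate forms certifying nonnegativity of coefficients, whose zero locus is exactly $K$; the meet $\hat u_i = \bigwedge_K g_K$ then satisfies $Z\hat u_i = C_i$. The term $u_i$ is computable from $K_i$, and the induced mapping $i\mapsto (u_i,l)$ reduces Markov's problem to ours, establishing undecidability. The main obstacle is matching the two equivalence notions: Markov's theorem concerns topological PL-homeomorphism of simplicial complexes, whereas the duality forces us to consider $\mathsf B$-homeomorphism, i.e., piecewise linear maps with integer matrices. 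Lemma \ref{lemma:combinatorial}(iii) resolves this by replacing both conditions with combinatorial isomorphism of regular subdivisions; since every geometric PL-homeomorphism between rational polyhedral cones can be rendered integer-linear on a sufficiently fine common regular subdivision (via the desingularization procedure of Lemma \ref{lemma:smooth}), the two equivalence relations coincide on the regular fans constructed above, and the reduction goes through.
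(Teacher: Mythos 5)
Your overall route---dualize the question to ``is $Z\hat u$ \,$\mathsf B$-homeomorphic to $\mathbb R^l$?'', cone a candidate triangulation $K_i$ of the sphere over the standard basis vectors of $\mathbb R^{v_i}$ to obtain an automatically regular fan, realize its support as a zeroset $Z\hat u_i$ by an explicit term, and reduce sphere unrecognizability---is essentially the Glass--Madden argument that the paper itself merely cites (its proof of Theorem \ref{theorem:glamad} is a reference to \cite[Theorem D]{glamad} together with Lemma \ref{lemma:sphere}, Theorem \ref{theorem:duality} and Novikov's theorem). A minor correction first: the sphere-recognition input is Novikov's theorem, valid for $S^{n}$ with $n\geq 5$; recognizability of $S^4$ is open and $S^3$ is recognizable, so your ``$l-1\geq 4$'' must become $l-1\geq 5$ (fixing, say, $l=6$ suffices, since $l$ is part of the instance); Markov's theorem concerns unrecognizability of general manifolds, not of spheres.

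The substantive gap is at the crux of the reduction: the implication ``$|K_i|$ PL-homeomorphic to $S^{l-1}$ $\Rightarrow$ $\mathcal A_{v_i}/\langle\hat u_i\rangle\cong\mathcal A_l$'', i.e.\ the upgrade of a (not necessarily rational) PL homeomorphism of cross-sections to a $\mathsf B$-homeomorphism of the cones, equivalently, by Lemma \ref{lemma:combinatorial}, to \emph{regular} subdivisions of the two fans with isomorphic abstract complexes. You justify this by asserting that every PL homeomorphism between rational polyhedral cones ``can be rendered integer-linear on a sufficiently fine common regular subdivision via the desingularization procedure of Lemma \ref{lemma:smooth}''. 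That is not what Lemma \ref{lemma:smooth} provides: desingularization refines a \emph{single} fan by starring at lattice points dictated by the arithmetic of that fan's generating vectors; it does not act on a given homeomorphism (whose linear pieces need not even be rational), and it cannot be performed on both sides simultaneously so as to preserve a combinatorial isomorphism---the starring points chosen on one side have no combinatorially corresponding lattice points on the other, so mirroring them generically destroys regularity, and the two desingularizations may chase each other without terminating. Likewise, your earlier step ``applying $\maxspec$ reduces the question to PL-equivalence of $Z\hat u\cap S^{k-1}$ with $S^{l-1}$'' only gives a necessary topological condition, so it cannot substitute for this implication. The converse direction ($\ell$-isomorphism $\Rightarrow$ PL-equivalence of links) and the remaining ingredients (the regular fan on standard basis vectors, the computable term $u_i$ with $Z\hat u_i$ equal to the support, the direction of the reduction) are fine; but the passage from PL equivalence to integral PL equivalence is exactly the point where the duality-theoretic work of Beynon and Glass--Madden is needed, and your argument does not supply it.
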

\begin{proof}
This is proved in 
 \cite[Theorem D]{glamad},
using
{Lemma \ref{lemma:sphere}},  Beynon's duality
{ (Theorem \ref{theorem:duality}),}
and  S. P. Novikov's  theorem,
\cite{sht},  \cite[\S 3]{chelek},
 on the  unrecognizability of the sphere
$S^n$, for $n\geq 5$. 
\end{proof}

 \section{Proof of Theorem \ref{theorem:main}}
 
 \begin{proof}
Let  $\hat{t}$ denote the $\mathsf B$-map 
$(\hat t_1,\ldots,\hat t_n) \colon \Rm \to \Rn.$

\smallskip

\medskip
\noindent
{\it Claim 1.}  $ \mathcal A_n$  is not $\ell$-isomorphic to the $\ellg$
$\gen(\hat t)$ generated by $t_1,\dots,t_n$   \,\, iff \,\, there is a  point $x\in \Rn\setminus \range(\hat t).$
 
\smallskip
If  $\Rn\setminus \range(\hat t) = \emptyset$  then by definition of $\mathcal D,$ 
$\mathcal A_n=\mathcal D(\range(\hat t))\cong
\gen(\hat t)$, by 
{ Lemma \ref{lemma:gen-range}}. 
Conversely, assume  $\Rn\setminus \range(\hat t) \not= \emptyset$,
say  $x\in \Rn\setminus \range(\hat t)$. 
As in the proof of { Lemma
\ref{lemma:zeroset=support}}, we construct a regular fan $\Lambda$
such that $\hat t$ is linear over each cone $C_i\in \Lambda.$ 
Each  image  $\hat t(C_i)$  
coincides with the positive hull of uniquely given
integer vectors that can be computed effectively. 
Thus we may assume
  $x\in S^{n-1}\subseteq \Rn$  without loss of generality.
By {Lemmas \ref{lemma:gen-range} and \ref{lemma:sphere}},
 $\maxspec(\mathcal D(\range(\hat t)))\cong
\maxspec(\gen( \hat t))\cong S^{n-1}\setminus X$ for
some set $X\subseteq S^{n-1}$   containing $x$;
further,  $\maxspec(\mathcal A_n)\cong S^{n-1}.$
Observe that   $S^{n-1}$ is not homeomorphic to 
$S^{n-1}\setminus X$:  as a matter of fact,   
stereographic projection from $x$ yields a homeomorphism
of $S^{n-1}\setminus \{x\}$ onto $\mathbb R^{n-1}$.
Since  $X \subseteq S^{n-1}\setminus \{x\}$ then 
some homeomorphic copy $X'$ of $ X$  is embedded 
into  $\mathbb R^{n-1}$. But, as is well known,  
$S^{n-1}$ is not embeddable into $\mathbb R^{n-1}$.
We conclude that $S^{n-1}\setminus X$
 is not homeomorphic to $S^{n-1}$.
(For details see, e.g.,  \cite[Example 7.2(6),
p.180]{ams}).  It follows that 
$\maxspec(\mathcal A_n)$ is not homeomorphic
to $ \maxspec(\gen( \hat t))$.
Therefore,  $\mathcal A_n$
 is not $\ell$-isomorphic
to $\gen(\hat t)$.  Claim 1 is settled.

\smallskip 
\noindent
{\it Claim 2.}  The two $\ellg$s
$\gen (\hat t)$  and  $ \mathcal A_n$ are $\ell$-isomorphic
 iff there are
regular fans  $\nabla$ and $\Delta$  with their supports
coinciding with 
$\range(\hat t)$ and  $\Rn$ respectively, such that their
associated abstract simplicial complexes
 $\overline \Delta, \overline \nabla$  are  
isomorphic.


$(\Rightarrow)$
From  $\gen(\hat t)  \cong \mathcal A_n$ and
 $\gen(\hat t)\cong \mathcal D(\range(\hat t))$
 {(Lemma \ref{lemma:gen-range})}, an
 application of {Theorem \ref{theorem:duality}}
yields  a $\mathsf B$-homeomorphism
of $\range(\hat t)$ onto $\Rn.$
The proof of  {Lemma \ref{lemma:combinatorial}(ii)$\Rightarrow$(iii)} 
shows how to compute
regular fans  $\Delta, \nabla$   
 with    $|\Delta|=\range(\hat t)$ and $|\nabla|=\Rn$
such that 
 $\overline \Delta$ is isomorphic to $ \overline \nabla$.

$(\Leftarrow)$
Since  $\overline \Delta, \overline \nabla$ are  
isomorphic, by  another application of 
{Lemma  \ref{lemma:combinatorial}(iii)$\Rightarrow$(ii)}  we compute 
a  $\mathsf B$-homeomorphism of their respective supports
$\range(\hat t)$ and  $\Rn$.
Then by  {Theorem \ref{theorem:duality} 
and Lemma \ref{lemma:smooth} }  we can write
$
\gen(\hat t)\cong \mathcal D(\range(\hat t))\cong \mathcal D(\Rn)
\cong \mathcal A_n.$

\smallskip
Having thus settled Claim 2, we next  let  the 
two  Turing machines, $\mathcal M$
and $\mathcal N$  run in parallel as follows:

%
%
 
\smallskip
 
\noindent
 {\it Machine $\mathcal M$}
 preliminarily constructs a regular fan  $\Delta$ in $\Rn$
 with $|\Delta|=\range(\hat t)$. In the light of
 {Lemma \ref{lemma:regular}} the construction is effective:
since  each $\mathsf B$-map  $\hat t_j$ is   computable,
  the homogeneous variant of the
triangulation process of  \cite[\S 2, p.32]{sta}
makes   $\range( \hat t)$  into 
a finite union of rational simplicial cones 
 in $\Rm$ whose primitive generating vectors are computable. 
 Next, $\mathcal M$ enumerates in some 
prescribed lexicographic order all rational points
in $\Rn$,  looking for some {\it rational} point $r\in \Rn$ that does not belong
to   $\range(\hat t)$. The existence of  such $r$ is given
by a routine refinement of Claim 1, upon noting that by hypothesis,  
 the range of $\hat t$ is a closed subset of $\Rn$ not coinciding
 with $\Rn.$
%
%
%
%
%
%
 Checking whether $r$ belongs to $|\Delta|$ is an effective
 operation: For each (maximal) cone  $C$ of $\Delta$,
  the problem whether $r$
lies in $C$  amounts
to solving a finite number of explicitly given linear inequalities with rational coefficients.
By Claim 1, the existence of such $r$ is a necessary and sufficient
condition for the  non-isomorphism of $\gen(\hat t)$
 and $\mathcal A_n.$

\medskip
\noindent
 {\it Machine $\mathcal N$}
  builds the sequence of all possible pairs of fans  $(\Phi_i,\Psi_i)$ 
 with   $|\Phi_i|=\Rn$  and   $|\Psi_i|=\range(\hat t),$ 
  as in the proof of
 {Lemma \ref{lemma:smooth}}, following
 some  prescribed lexicographic order. 
 Since regularity is decidable  (by {Lemma \ref{lemma:regular}}),
 $\mathcal N$ simultaneously
  generates the subsequence $(\nabla_i,\Delta_i)$ of all 
 pairs of  regular fans, with their corresponding  abstract simplicial 
 complexes
  $(\overline \nabla_i, \overline \Delta_i)$. At stage $j$,
  $\mathcal N$ checks whether  there is an
 isomorphism between 
   $\overline \nabla_j$ and $\overline \Delta_j$.
     This is carried on by exhaustive search
 among all
 possible one-one maps of the set of
 primitive generating vectors of cones of  $\nabla_j$
 onto the set of
 primitive generating vectors of cones of  $\Delta_j$.
By Claim 2, the existence of an isomorphism
 between
   $\overline \nabla_j$ and $\overline \Delta_j$ for some $j$, 
 is a necessary and sufficient  condition 
for the $\ellg$s\,\,
$\gen(\hat t(X_1,\ldots,X_n))$\,\,  and \,$\mathcal A_n$\,
to be $\ell$-isomorphic.

\smallskip
Precisely one of the competing machines
$\mathcal M$ and $\mathcal N$ will stop after
a finite number of steps. 
In this way we get the desired  decision 
procedure for the problem of
{ Theorem \ref{theorem:main}}, and complete its proof.
\end{proof}

 \section{Proof of Theorem \ref{theorem:main-due}}
\begin{proof} 
By way of contradiction, assume the decidability of the problem
\begin{equation}
\label{equation:nml}
\gen(\hat t_1(X_1,\ldots,X_m),\ldots,\hat t_n(X_1,\ldots,X_m))
\,\,\mbox{?`} \hspace{-0.9mm}\cong ?\,\,\mathcal A_l
\end{equation}
of { Theorem \ref{theorem:main-due}}.
A contradiction will be obtained
by defining a (Turing) reduction to Problem  \eqref{equation:nml} of
the  undecidable problem
\begin{equation}
\label{equation:glamad}
\mathcal A_k/\langle \hat u(X_1,\ldots,X_k) \rangle 
\,\,\mbox{?`}\hspace{-0.9mm}\cong\, ?\,\,  \mathcal A_l
\end{equation}
of  { Theorem \ref{theorem:glamad}}.
To this purpose we first  prove the following parametrization result:
 
 \medskip
 \noindent
 {\it Claim:}
 The zeroset $Z\hat u$ 
 coincides with the support of 
 some regular fan $\Delta$,
 and is    
$\mathsf B$-homeomorphic to the
range  of a  $\mathsf B$-map 
$h \colon \mathbb R^k \to \mathbb R^k$
whose range  
coincides with the support of some regular fan $\nabla$.

The first statement follows from {Lemma \ref{lemma:zeroset=support}}. 
Next, for the construction of the  $\mathsf B$-map 
$h$, { Theorem \ref{theorem:projective}}
shows that the $k$-generator 
projective  $\ellg$  $\mathcal A_k/\langle \hat u \rangle 
\cong \mathcal A_k\restrict Z\hat u$
  is $\ell$-isomorphic to 
a retract  $H$   
of   $\mathcal A_k$.  Thus for some 
idempotent endomorphism  $h =(h_1,\ldots,h_t)\colon
\mathcal A_k\to \mathcal A_k$ we can write
$H=\gen(h_1,\ldots,h_t)\subseteq \mathcal A_k$. 
%
%
%
%
By  { Lemma \ref{lemma:gen-range},} 
from the isomorphisms 
$
\mathcal A_k\restrict Z\hat u\cong
\mathcal A_k/\langle \hat u \rangle\cong H\cong \gen(h)
\cong \mathcal D(\range(h))
$
we get an isomorphism $\phi\colon \mathcal A_k\restrict Z\hat u\cong
\mathcal D(\range(h))$.   Recalling the notation
 \eqref{equation:functor-arrows}, 
  { Theorem \ref{theorem:duality}} 
yields the $\mathsf B$-homeomorphism  $\mathcal D(\phi)$   
of $\range(h)$ 
onto $Z\hat u$.   
By {Lemma  \ref{lemma:range=support}}
there is a regular fan  $\nabla$ whose
support  coincides with
 $\range(h)$.
This settles our claim.

\medskip

To conclude the proof,
 let    Turing machine $\mathcal E$ enumerate  all $k$-tuples 
$s_1,s_2,\ldots$ of  $\ellt$s in $k$ variables  $X_1,\ldots,X_k$ and
check, for any such $s_t=(s_{t1},\ldots,s_{tk})$ whether
 the zeroset  $Z\hat u$ is $\mathsf B$-homeomorphic to 
$\range(\hat s_t)\subseteq \mathbb R^k$.
For each $t=1,2,\ldots$, this
 $\mathsf B$-homeomorphism problem is recursively enumerable.
Indeed, in view of 
{Lemma \ref{lemma:combinatorial}}, we may suppose that,
for each $t$, 
$\mathcal E$ also 
 enumerates all pairs of
regular fans  $\Phi_{ti}$  with support    $\range(\hat s_t)$
 and $\Psi_{ti}$ with support
  $Z\hat u$ until two fans are found whose 
  combinatorial counterparts  
  $\overline \Phi_{ti},\overline\Psi_{ti}$
are isomorphic---which can be decided by exhaustive search.
By our claim, for some $t^*$ and $i^*$,
 a $k$ tuple $s_{t^*}$ of $\ellt$s and a pair  $\Phi_{t^*i^*},\Psi_{t^*i^*}$ 
of regular fans will be found, together with an
isomorphism  $\eta_{t^*i^*}$  of the two abstract simplicial complexes
  $\overline \Phi_{t^*i^*},\overline\Psi_{t^*i^*}$.
  This ensures the termination of machine $\mathcal E.$

Once the desired $k$-tuple   
$s_{t^*}$ is found out  
(along with the triple
 $\eta_{t^*i^*}$,   $\overline \Phi_{t^*i^*},\overline\Psi_{t^*i^*}$
 certifying the $\mathsf B$-homeomorphism
 of $\range(\hat s_{t^*})$ and   $Z\hat u$),      
%
by  {Theorem \ref{theorem:duality} and
Lemma \ref{lemma:gen-range}  }
 we can write
$$
\mathcal A_{k}\supseteq
\gen(\hat s_{t^*})\cong \mathcal D(\range(\hat s_{t^*}))\cong \mathcal D(Z\hat u)
\cong \mathcal A_k/\langle \hat u \rangle.
$$
As a consequence,  the answer to question  \eqref{equation:glamad} 
  is equal to the answer to the question
$\gen(\hat s_{t^*})   \,\,\mbox{?`}\hspace{-1.4mm}\cong ?\,\,  \mathcal A_l.$
The latter is decidable by our
absurdum hypothesis.
This yields the decidability of question \eqref{equation:glamad}, in contradiction 
with  {Theorem  \ref{theorem:glamad}}.
\end{proof}

\section{Proof of Corollary \ref{corollary:main-tre}, 
Concluding Remarks and a Problem}
\label{section:final}
We first prove that every
finitely generated free $\ell$-group $G$ is {\it hopfian},
in the sense that every surjective $\ell$-endomorphism
of $G$ is injective. 
This is the
$\ellg$-theoretic counterpart of a well known
basic property 
of many important classes of structures,
\cite{eva, hir, kos}.

\medskip
\begin{proposition}
\label{proposition:hopf}  Let $\eta$ be an
$\ell$-homomorphism of $\mathcal A_n$ onto
$\mathcal A_n$. Then $\eta$ is one-one.
\end{proposition}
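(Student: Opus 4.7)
The plan is to invoke Baker-Beynon duality to convert surjectivity of $\eta$ into a geometric statement about the dual $\mathsf{B}$-map, from which injectivity will drop out immediately.

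First I would represent $\eta$ dually. Setting $b_i := \eta(\pi_i) \in \mathcal{A}_n$ for $i = 1, \dots, n$, and $b := (b_1, \dots, b_n) \colon \mathbb{R}^n \to \mathbb{R}^n$, the freeness of $\mathcal{A}_n$ on $\pi_1, \dots, \pi_n$ gives $\eta(g) = g \circ b$ for every $g \in \mathcal{A}_n$; in the language of \eqref{equation:functor-arrows}, this just says $\eta = \mathcal{D}(b)$.

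Second, I would show that $b$ is already surjective as a pointset map. The image of $\eta$ coincides with $\gen(b_1, \dots, b_n)$, which by Lemma \ref{lemma:gen-range} is $\ell$-isomorphic to $\mathcal{D}(\range(b))$. The hypothesis that $\eta$ is onto thus forces the abstract $\ell$-isomorphism $\mathcal{D}(\range(b)) \cong \mathcal{A}_n = \mathcal{D}(\mathbb{R}^n)$. Here I would apply the argument of Claim 1 in the proof of Theorem \ref{theorem:main}: if some point $x \in \mathbb{R}^n$ failed to lie in $\range(b)$, then by Lemma \ref{lemma:sphere} the space $\maxspec(\mathcal{D}(\range(b))) \cong \range(b) \cap S^{n-1}$ would be a proper closed subset of $S^{n-1}$, which---via stereographic projection from $x$ and the non-embeddability of $S^{n-1}$ into $\mathbb{R}^{n-1}$---cannot be homeomorphic to $\maxspec(\mathcal{A}_n) \cong S^{n-1}$, contradicting the abstract isomorphism. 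Hence $\range(b) = \mathbb{R}^n$.

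Third, injectivity is then immediate: if $g \in \ker \eta$ then $g \circ b \equiv 0$ on $\mathbb{R}^n$, and the surjectivity of $b$ forces $g$ to vanish on all of $\mathbb{R}^n$, so $g = 0$ in $\mathcal{A}_n$. The only genuinely nontrivial step is the second one, where one must upgrade an abstract $\ell$-isomorphism of $\mathcal{D}$-images to the geometric equality of supports; Steps one and three are formal consequences of the duality and of the definition of $\eta(g) = g \circ b$. No appeal to Theorem \ref{theorem:main} itself is required---only to the topological argument of Claim 1 therein, which is logically prior to the decision procedure built on top of it.
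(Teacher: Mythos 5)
Your proposal is correct and follows essentially the same route as the paper: both represent $\eta$ as composition with the dual $\mathsf B$-map $b=(\eta(\pi_1),\dots,\eta(\pi_n))$, invoke Lemmas \ref{lemma:gen-range} and \ref{lemma:sphere}, and use the fact that $S^{n-1}$ is not homeomorphic to $S^{n-1}$ minus a point to rule out a missing point of $\range(b)$. The only difference is organizational---you first establish $\range(b)=\mathbb{R}^n$ and then read off injectivity, while the paper argues by contradiction from a nonzero element of $\ker\eta$---which is the same argument in contrapositive form.
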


\begin{proof}
The set $\{g_1,\ldots,g_n\}=\{\eta(\pi_1),\dots,
\eta(\pi_n)\}$ generates $\mathcal A_n$.
  $\eta$ is
 the unique $\ell$-ho\-m\-o\-mor\-ph\-ism of 
 $ \mathcal A_n$ onto $\mathcal A_n$ 
 extending the map $\pi_i\mapsto g_i,
 \,\,(i=1,\ldots,n).$
For each  $l\in \mathcal A_n$,
 $\eta(l)=l\circ (g_1,\ldots,g_n)=l\circ g$. 
If (absurdum hypothesis)
 $\eta$ is not one-one, 
 there is  $0\not= f\in \mathcal A_n$ such that $0= \eta(f)=f\circ g.$ 
Pick $x\in \Rn$ such that  $f(x)\not=0$.  Since $f$ is a
$\mathsf B$-map  we may safely assume
 $x\in S^{n-1}$.  Since $f$ 
constantly vanishes over
$\range(g)$ then   $x\notin \range(g)$.  By
{Lemmas   \ref{lemma:sphere} and \ref{lemma:gen-range}},
we have homeomorphisms $\maxspec(\mathcal A_n)\cong S^{n-1}$ 
along with  
$$
\maxspec(\gen(g))\cong\maxspec(\mathcal D(\range(g)))
\cong \range(g)\cong S^{n-1}\setminus X 
$$
for some set $X\subseteq \Rn$ containing  $x$.
We have already noted  (\cite[p.180]{ams}) that
$S^{n-1}$ is not homeomorphic to  $S^{n-1}\setminus X $.
As a consequence, the $\ellg$
 $\gen(g)$ is not $\ell$-isomorphic to $\mathcal A_n$,
whence a fortiori $\{g_1,\ldots,g_n\}$ is not a generating set
of $\mathcal A_n$, a contradiction.
 \end{proof}

\medskip

\noindent{\it Conclusion of the proof of Corollary \ref{corollary:main-tre}.}

\smallskip
\noindent{\it Claim.} The following conditions are equivalent:

\begin{itemize}
\item[(i)] $\{\hat t_1,\ldots,\hat t_n\}$ is a free generating set of
$\gen(\hat t)$;
\item[(ii)]  $\gen(\hat t)$ is $\ell$-isomorphic to $\mathcal A_n$.
\end{itemize}

\smallskip

(i)$\Rightarrow$(ii)  As is well known, up to $\ell$-isomorphism
there is a unique free $n$-generator $\ell$-group.

(ii)$\Rightarrow$(i)  By hypothesis,  $\gen(\hat t)$ has a
free generating set 
$\{h_1,\ldots,h_n\}$. The map
$h_i\mapsto \hat t_i,
 \,\,(i=1,\ldots,n)$ uniquely extends to an
 $\ell$-endomorphism $\psi$ of $\gen(\hat t).$
 By definition of $\gen(\hat t)$,  $\psi$ is onto
$\gen(\hat t).$ Proposition \ref{proposition:hopf}
(with $\gen(\hat t)$ in place of $\mathcal A_n$, and
$\psi$ in place of  $\eta$) ensures that $\psi$ is
one-one, whence it is an $\ell$-automorphism of
$\gen(\hat t)$.  
As an $\ell$-isomorphic copy (via $\psi$) of the free
generating set $\{h_1,\ldots,h_n\}$,
the set  $\{\hat t_1,\ldots,\hat t_n\}$ itself is
free generating in $\gen(\hat t).$

\smallskip
Having thus settled our claim, 
the problem whether $\{\hat t_1,\ldots,\hat t_n\}$
is a free generating set of  $\gen(\hat t)$ has the same answer
as the problem whether  
 $\gen(\hat t)$ is $\ell$-isomorphic to $\mathcal A_n$. The latter
problem  is   decidable,   by  {Theorem \ref{theorem:main}}. 
$\qed$

\smallskip
$$
***
$$

\bigskip
\noindent
In this paper  every $n$-generator  
$\ells$  $G$  of a free $\ellg$  $\mathcal A_m$ has been 
coded 
by  $\ellt$s   $t_i(X_1,\ldots,X_m),\,\,\,(i=1,\dots,n)$ 
denoting  a generating set    $\{\hat t_1,\ldots,\hat t_n\}$
 of $G$. 
This coding  is no less expressive than the traditional one---where  $G$ is presented as a principal
quotient of a  free $\ellg$.
Further,  the $\ellg$ terms  $t_i$ provide a convenient method 
of presenting rational polyhedra as finite strings of symbols. 
As a matter of fact, letting the map
$\hat t \colon \Rm\to \Rn$  be defined by
 $\hat t=(\hat t_1,\ldots,\hat t_n) $, the
set   
$$P_{\hat t}=\range(\hat t)\cap \bd[-1,1]^n$$
 is the most general rational polyhedron in $\Rn$ with 
dimension  $\leq n-1,$\,\,(Theorem
\ref{theorem:duality}, Lemma
\ref{lemma:zeroset=support}).
 
Given  generators  $t_1,\dots,t_n$, the
recognition problem of the $\ellg$  $G$,
(resp.,  of
the dual  rational polyhedron  $P_{\hat t}$)
depends on three parameters:

\begin{itemize}
\item[(i)] The dimension   of the ambient space $\Rn$ of $\range(\hat t)$,
i.e., the number of terms $t_i;$
\item[(ii)]  The dimension $m$ of the domain  $\Rm$ of the parametrization
$\hat t,$  i.e., the number of variables of each $t_i;$
\item[(iii)] The dimension $l-1$ of the sphere to which
$\range(\hat t)$ is compared, or the number $l$
of free generators of the free $\ellg$  $\mathcal A_l$  to which $G$ is  compared.   
 \end{itemize}

\noindent
Depending on these  parameters we get 
nontrivial undecidability and  (surprisingly enough)
  decidability results for recognition and isomorphism problems
  of the rational polyhedra $P_{\hat t}$, 
involving the fine structure of
finitely generated projective $\ellg$s, their maximal
spectral spaces, and their associated regular fans.
Remarks \ref{remark:1}, \ref{remark:2},
as well as \ref{remark:final} below, 
discuss the interplay between  
$n,m,l$.

\begin{problem}
\label{problem:final}
Fix $n=1,2,\dots,$ and let $\mathcal P_n$ be the following
problem:

\smallskip
\noindent
${\mathsf{INSTANCE}:}$  
$\ell$-group  terms  $t_1,\ldots,t_{n+1}$   in
the same variables $X_1,\ldots,X_n$.  
 
\smallskip
\noindent
${\mathsf{QUESTION}:}$  
Is   $\mathcal A_n$\,\,
$\ell$-isomorphic to
  the $\ellg$  
generated by  $\hat t_1,\ldots,\hat t_{n+1}$?

\smallskip
\noindent For which $n$ is  $\mathcal P_n$ decidable?
\end{problem}

\smallskip
\begin{remark}
\label{remark:final}
{ Lemma  \ref {lemma:gen-range}}  yields  homeomorphisms
 $$\maxspec(\mathcal D(\range(\hat t)))
 \cong\maxspec(\gen(\hat t_1,\dots,\hat t_{n+1}))
 \cong \range(\hat t).$$
Thus by  {Theorem \ref{theorem:duality},} problem 
$\mathcal P_n$  calls for a mechanical method to decide 
whether or not  the maximal spectral space
 $\maxspec(\gen(\hat t))=
 \maxspec(\gen(\hat t_1,\dots,\hat t_{n+1}))$ 
 is homeomorphic to the sphere
  $S^{n-1},$ or else, a proof that such method does not exist. 
{By Lemma \ref{lemma:sphere}} we are left with the
following   equivalent  geometric counterpart of
problem  $\mathcal P_n$:
$$
\range(\hat t)\cap \bd [-1,1]^{n+1}
\,\,\,\, \mbox{?`}\hspace{-1.4 mm}\cong ?\,\, \,\,\,
S^{n-1}.
$$


\noindent 
$\mathcal P_n$ is trivially decidable  for $n=1,2$.   
$\mathcal P_3$ is decidable, upon recalling  that   
the
 2-sphere is  only surface
with Euler characteristic 2.  
The decidability of $\mathcal P_4$
 follows from the recognizability of the 3-sphere,
a  highly nontrivial result, \cite{tho}, \cite{pjm}.

 For $n\geq 5$, the literature is of little help to get a decidability result
 for $\mathcal P_n$. 
 Quite the contrary,
  one might conjecture  that $\mathcal P_n$
   has a negative solution, at least for  $n>5$,
  as a consequence of  Novikov's unrecognizability theorem
   for $S^{n-1}$, \,\,     \cite{chelek, sht}. 
  %

And yet,   $\mathcal P_n$
has two special properties: 
{\it    $\range(\hat t)$
   is  automatically constrained to live in  
$(n+1)$-space by the parametric map 
$\hat t=(\hat t_1,\dots,\hat t_{n+1})$, and
its intersection with $\bd[-1,1]^{n+1}$
is  to be compared with $S^{n-1}$}. 
%
 %
%
%
%

For Problem \ref{problem:final} and its variants,
knowledge of the dimension $r$ of the ambient space where  
$\range(\hat t)$ 
lives,  and of the difference between
$r$  and the dimension 
of the sphere to be compared with 
$\range(\hat t)\cap \bd[-1,1]^r$,  may 
turn out to be decisive.
 Thus the dovetailing construction
  in the proof of  
 {Theorem  \ref{theorem:main}}  yields
  the {\it decidability} of the problem
\begin{equation}
\label{equation:ultima}
\range(\hat t)\cap \bd[-1,1]^n \,\,\,\,\mbox{?`}\hspace{-1.4 mm}\cong ?\,\, 
  \,\,S^{n-1},
  \end{equation}
  for all $n$,      independently of 
   the dimension $m$
   of the domain of the parametrization  $\hat t=(\hat t_1,\dots,\hat t_n)$
   of $\range(\hat t)$. 
   On the other hand,  the geometric counterpart of 
 {Theorem \ref{theorem:main-due}} states the 
 {\it undecidability}  of 
 \eqref{equation:ultima}  when  $S^{n-1}$  is replaced by an
arbitrary  lower-dimensional sphere  $S^l$.
\end{remark}


 \bibliographystyle{plain}

 \end{document}